\documentclass[10pt]{article}
\usepackage{amsmath,amssymb}
\usepackage{paralist}
\usepackage{graphics} %% add this and next lines if pictures should be in esp format
\usepackage{epsfig} %For pictures: screened artwork should be set up with an 85 or 100 line screen
\usepackage{graphicx} 
\usepackage{epstopdf}%This is to transfer .eps figure to .pdf figure; please compile your paper using PDFLeTex or PDFTeXify.
\usepackage[colorlinks=true]{hyperref}
   % Warning: when you first run your tex file, some errors might occur,
   % please just press enter key to end the compilation process, then it will be fine if you run your tex file again.
   % Note that it is highly recommended by AIMS to use this package.
\hypersetup{allcolors=blue} %urlcolor=blue, citecolor=blue}
\usepackage{stmaryrd}

% our packages BEGIN

\usepackage{rotating}
\usepackage{adjustbox}
\usepackage{array}      
\usepackage{multicol}        % used for the two-column index
\usepackage[bottom]{footmisc} % places footnotes at page bottom
\usepackage{float}
\usepackage{cancel}
\usepackage[usenames,dvipsnames]{pstricks}
\usepackage{multirow} 
\usepackage{color}
\usepackage{titlesec}
\usepackage{setspace}
\usepackage{colortbl}

\setcounter{secnumdepth}{4}

\titleformat{\paragraph}
{\normalfont\normalsize\bfseries}{\theparagraph}{1em}{}
\titlespacing*{\paragraph}
{0pt}{3.25ex plus 1ex minus .2ex}{1.5ex plus .2ex}

\setlength{\topmargin}{0cm}
\setlength{\textwidth}{16.5cm}
\setlength{\textheight}{21cm}
\setlength{\oddsidemargin}{0.5cm}

\newtheorem{theorem}{Theorem}
\newtheorem{definition}{Definition}

\newtheorem{proposition}{Proposition}
\newtheorem{lemma}{Lemma}
\newtheorem{remark}{Remark}

\newcommand{\qed}{\space\hfill\hspace*{\fill} $\vbox{\hrule\hbox{\vrule
height1.3ex\hskip1.3ex\vrule}\hrule}$\hss\vskip\topsep\relax}

\newcommand{\C}{\mathcal{C}}
\newcommand{\R}{\mathbb{R}}

\newcommand{\f}[2]{\frac{\displaystyle{#1}}{\displaystyle{#2}}}
\newcommand{\n}{\newline}

\def\be{\begin{equation}}
\def\ee{\end{equation}}

\def\F{\mathcal{F}}
\def\R{\mathbb{R}}

\def\P{\mathbb{P}}
\def\C{\mathcal{C}}

 \def\A{\mathcal{A}}

\def\f{\frac}

\def\ben{\begin{eqnarray}}
\def\een{\end{eqnarray}}

\def\ezm{e^{-\alpha z^*}}
\def\n{\newline}
\def\v{\varphi}

\def\fibra{\theta_{-t}\omega}

\allowdisplaybreaks

\title{\bf Corrigendum to the chapter ``Some aspects concerning the dynamics of stochastic chemostats"}

\vspace{0.5cm}

\author{Tom\'as Caraballo$^*$, Mar\'{\i}a J. Garrido-Atienza$^*$ and Javier L\'opez-de-la-Cruz\thanks{Partially supported by FEDER and Ministerio de Econom\'ia y Competitividad under grant MTM2015-63723-P, Junta de Andaluc\'{\i}a under the Proyecto de Excelencia P12-FQM-1492 and VI Plan Propio de Investigaci\'on y Transferencia de la Universidad de Sevilla.}\\
{\small\textsl{Dpto. de Ecuaciones Diferenciales y An\'alisis Num\'erico,}}\\
{\small\textsl{Facultad de Matem\'aticas, Universidad de Sevilla}}\\
{\small\textsl{C/ Tarfia s/n, Sevilla 41012, Spain}}\\
{\small\texttt{e-mails: \{caraball,mgarrido,jlopez78\}@us.es}}
\\ \\
Alain Rapaport \\
{\small\textsl{MISTEA (Mathematics, Informatics and Statistics for Environmental and Aggronomic Sciencs),}}\\
{\small\textsl{INRA, Montpellier SupAgro, Univ. Montpellier, 2 place Pierre Viala, 34060 Montpellier cedex 01, France}}\\
{\small\texttt{e-mail: alain.rapaport@inra.fr}}}

\date{}

\begin{document}

\spacing{0.7}

\maketitle

%\doublespacing
%\onehalfspace
\singlespace
%\spacing{1.5}
\vspace*{0.5cm}

\noindent {\bf Abstract}\quad {\em In this paper we correct an error made in the paper \cite{CGL}, where a misleading stochastic system was obtained due to a lapse concerning a sign in one of the equations at the beginning of the work such that the results obtained are quite different to the ones developed throughout this paper since the required conditions, and also the results, substantially change. Then, in this work we repair the analysis carried out in \cite{CGL}, where we studied a simple chemostat model influenced by white noise by making use of the theory of random attractors. Even though the changes are minor, we have chosen to provide a new version of the entire paper instead of a list of changes, for sake of readability. We first perform a change of variable using the Ornstein-Uhlenbeck process, transforming our stochastic model into a system of differential equations with random coefficients. After proving that this random system possesses a unique solution for any initial value, we analyze the existence of random attractors. Finally we illustrate our results with some numerical simulations.}

\vspace*{0.5cm}

\noindent {\bf Key words:} chemostat model, Ornstein-Uhlenbeck process, random dynamical system, random attractor\quad

\vspace*{0.5cm}

\section{Introduction}\label{intro}

Modeling chemostats is a really interesting and important problem with special interest in mathematical biology, since they can be used to study recombinant problems in genetically altered microorganisms (see e.g. \cite{freter,rfreter}), waste water treatment (see e.g. \cite{dans,lariviere}) and play an important role in theoretical ecology (see e.g. \cite{bungay,cunnin,fredrickson,jannash,taylor,veldcamp,w83,w80}). Derivation and analysis of chemostat models are well documented in \cite{sm,swalt,w90} and references therein.\n

Two standard assumptions for simple chemostat models are 1) the availability of the nutrient and its supply rate are fixed and 2) the tendency of the microorganisms to adhere to surfaces is not taken into account. However, these are very strong restrictions as the real world is non-autonomous and stochastic, and this justifies the analysis of stochastic chemostat models.\n

Let us first consider the simplest chemostat model
\begin{eqnarray}
\f{dS}{dt} &=& (S^0-S)D-\f{mSx}{a+S}, \label{I1}
\\[1.3ex]
\f{dx}{dt} &=& x\left(\f{mS}{a+S}-D\right), \label{I2}
\end{eqnarray}
\noindent where $S(t)$ and $x(t)$ denote concentrations of the nutrient and the microbial biomass, respectively; $S^0$ denotes the volumetric dilution rate, $a$ is the half-saturation constant, $D$ is the dilution rate and $m$ is the maximal consumption rate of the nutrient and also the maximal specific growth rate of microorganisms. We notice that all parameters are positive and we use a function Holling type-II, $\mu(S)=mS/(a+S)$, as functional response of the microorganism describing how the nutrient is consumed by the species (see \cite{sree2} for more details and biological explanations about this model).\n

However, we can consider a more realistic model by introducing a white noise in one of the parameters, therefore we replace the dilution rate $D$ by $D+\alpha \dot{W}(t)$, where $W(t)$ is a white noise, i.e., is a Brownian motion, and $\alpha\geq 0$ represents the intensity of noise. Then, system \eqref{I1}-\eqref{I2} is replaced by the following system of stochastic differential equations understood in the It\^o sense
\begin{eqnarray}
dS &=& \left[(S^0-S)D-\f{mSx}{a+S}\right]dt+\alpha(S^0-S)dW(t), \label{I3}
\\[1.3ex]
dx &=& x\left(\f{mS}{a+S}-D\right)dt-\alpha xdW(t). \label{I4}
\end{eqnarray}

System \eqref{I3}-\eqref{I4} has been analyzed in \cite{xu} by using the classic techniques from stochastic analysis and some stability results are provided there. However, as in our opinion there are some unclear points in the analysis carried out there, our aim in this paper is to use an alternative approach to this problem, specifically the theory of random dynamical systems, which will allow us to partially improve the results in \cite{xu}. In addition, we will provide some results which hold almost surely while those from \cite{xu} are said to hold in probability.\n

Firstly, thanks to the well-known conversion between It\^o and Stratonovich sense, we obtain from \eqref{I3}-\eqref{I4} its equivalent Stratonovich formulation which is given by
\begin{eqnarray}
dS&=&\left[(S^0-S)\bar{D}-\frac{mSx}{a+S}\right]dt+\alpha(S^0-S)\circ dW(t),\label{5}
\\[1.3ex]
dx&=&\left[-\bar{D}x+\f{mSx}{a+S}\right]dt-\alpha x\circ dW(t),\label{6}
\end{eqnarray}
\noindent where $\bar{D}:=D+\frac{\alpha^2}{2}$.\n

In Section \ref{preliminaries} we recall some basic results on random dynamical systems. In Section \ref{randomchemostat} we start with the study of equilibria and we prove a result related to the existence and uniqueness of global solution of system \eqref{5}-\eqref{6}, by using the so-called Ornstein-Uhlenbeck  (O-U) process. Then, we define a random dynamical system and prove the existence of a random attractor giving an explicit expression for it. Finally, in Section \ref{nsfc} we show some numerical simulations with different values of the parameters involved in the model and we can see what happens when the amount of noise $\alpha$ increases.

%%%%%%%%%%%%

\section{Random dynamical systems}\label{preliminaries}

In this section we present some basic results related to random dynamical systems (RDSs) and random attractors which will be necessary for our analysis. For more detailed information about RDSs and their importance, see \cite{arnold}.\newline

Let $(\mathcal{X},\Vert \cdot \Vert _\mathcal{X})$ be a separable Banach space and let $(\Omega, \F, \P)$ be a probability space where
$\F$ is the $\sigma-$algebra of measurable subsets of $\Omega$ (called
``events") and $\P$ is the probability measure.   To connect the state $\omega$
in the probability space $\Omega$ at time 0 with its state after a time of $t$
elapses, we define a flow $\theta = \{\theta_t \}_{t \in \R}$ on
$\Omega$ with each $\theta_t$ being a mapping $\theta_t: \Omega \to \Omega$ that
satisfies
\begin{itemize} 
\item[(1)]  $\,\,\,\,$ $\theta_0 = \mbox{Id}_\Omega$, 
\item[(2)]  $\,\,\,\,$ $\theta_s \circ \theta_t = \theta_{s+t}$ for all $s, t \in \R$,
\item[(3)]  $\,\,\,\,$ the mapping $(t, \omega) \mapsto \theta_t \omega$ is measurable,
\item[(4)]  $\,\,\,\,$ the probability measure $\P$ is preserved by $\theta_t$, i.e.,  $\theta_t \P = \P$. 
\end{itemize}
This set-up establishes a time-dependent family $\theta$ that tracks the noise,
and $(\Omega, \F, \P, \theta)$ is called a \textit{metric dynamical system} (see \cite{arnold}).
\begin{definition}\label{RDS}
A stochastic process $\{\v(t,\omega )\}_{t\geq 0,\omega \in
\Omega }$ is said to be a \textrm{continuous RDS over $(\Omega ,\mathcal{F},%
\ensuremath{\mathbb{P}},\{\theta _t\}_{t\in \ensuremath{\mathbb{R}}})$ with
state space $\mathcal{X}$} if $\v:[0,+\infty )\times \Omega \times \mathcal{X}\rightarrow \mathcal{X}$ is $(%
\mathcal{B}[0,+\infty )\times \ \mathcal{F}\times \mathcal{B}(\mathcal{X}),\ \mathcal{B%
}(\mathcal{X}))$- measurable, and for each $\omega \in \Omega $,
\begin{itemize}
\item[(i)]   $\,\,\,\,$ the mapping $\v(t,\omega ): \mathcal{X}\rightarrow \mathcal{X}$, $x\mapsto \v(t,\omega
)x$ is continuous for every $t\geq 0$,
\item[(ii)]   $\,\,\,\,$ $\v(0,\omega )$ is the identity operator on $\mathcal{X}$,
\item[(iii)]   $\,\,\,\,$ (cocycle property) $\v(t+s,\omega )=\v(t,\theta _s\omega)\v(s,\omega )$ for all $s,t\geq 0$.
\end{itemize}
\end{definition}
\begin{definition}
Let $(\Omega ,\mathcal{F},\ensuremath{\mathbb{P}})$ be a probability space. A random set $K$ is a measurable subset of $\mathcal{X}\times\Omega$ with respect to the product $\sigma-$algebra $\mathcal{B}(\mathcal{X})\times\mathcal{F}$.\n

The $\omega-$section of a random set $K$ is defined by
$$K(\omega)=\{x\,:\,(x,\omega)\in K\},\quad \omega\in\Omega.$$
In the case that a set $K\subset \mathcal{X}\times \Omega$ has closed or compact $\omega-$sections it is a random set as soon as the mapping $\omega\mapsto d(x,K(\omega))$ is measurable (from $\Omega$ to $[0,\infty)$) for every $x\in \mathcal{X}$, see \cite{crauel2}. Then $K$ will be said to be a closed or a compact, respectively, random set. It will be assumed that closed random sets satisfy $K(\omega)\neq\emptyset$ for all or at least for $\ensuremath{\mathbb{P}}-$almost all $\omega\in\Omega$.
\end{definition}
\begin{remark}
It should be noted that in the literature very often random sets are defined provided that $\omega\mapsto d(x,K(\omega))$ is measurable for every $x\in \mathcal{X}$. Obviously this is satisfied, for instance, when $K(\omega)=N$ for all $\omega$, where $N$ is some non-measurable subset of $\mathcal{X}$, and also when $K=(U\times F)\cup(\overline{U}\times F^c)$ for some open set $U\subset \mathcal{X}$ and $F\notin\mathcal{F}$. In both cases $\omega\mapsto d(x,K(\omega))$ is constant, hence measurable, for every $x\in \mathcal{X}$. However, both cases give $K\subset \mathcal{X}\times\Omega$ which is not an element of the product $\sigma-$algebra $\mathcal{B}(\mathcal{X})\times\mathcal{F}$.
\end{remark}
\begin{definition}
A bounded random set $K(\omega) \subset \mathcal{X}$\ is said to be \textrm{%
tempered with respect to }$\{\theta _t\}_{t\in \ensuremath{\mathbb{R}}}$ if
for a.e. $\omega \in \Omega $, $$\lim_{t\rightarrow \infty } e^{-\beta
t}\sup\limits_{x\in K(\theta _{-t}\omega )}\Vert x\Vert _\mathcal{X}=0, \quad \mbox{for all} \ \beta > 0;$$ a random variable $\omega \mapsto r(\omega )\in \ensuremath{\mathbb{R}}$%
\textrm{\ }is said to be \textrm{tempered with respect to }$\{\theta
_t\}_{t\in \ensuremath{\mathbb{R}}}$ if for a.e. $\omega \in \Omega $, $$%
\lim_{t\rightarrow \infty} e^{-\beta t}\sup\limits_{t\in %
\ensuremath{\mathbb{R}}}|r(\theta _{-t}\omega )|=0, \quad \mbox{for all} \ \beta >0.$$ 
\end{definition}
In what follows we use ${\mathcal{E}}(\mathcal{X})$ to denote the set of all tempered
random sets of $\mathcal{X}$.
\begin{definition}
A  random set $B(\omega )\subset \mathcal{X}$ is called a
\textrm{random absorbing set} in ${\mathcal{E}}(\mathcal{X})$ if for any $E\in
{\mathcal{E}}(\mathcal{X})$ and a.e. $\omega \in \Omega $, there exists $T_E(\omega )>0
$ such that $$\v(t,\theta _{-t}\omega)E(\theta _{-t}\omega )\subset
B(\omega), \quad \forall t\geq T_E(\omega ).$$
\end{definition}
\begin{definition}
Let $\{\v(t,\omega )\}_{t\geq 0,\omega \in \Omega }$ be an RDS
over $(\Omega ,\mathcal{F},\ensuremath{\mathbb{P}},\{\theta _t\}_{t\in %
\ensuremath{\mathbb{R}}})$ with state space $\mathcal{X}$ and let $A(\omega
)(\subset \mathcal{X})$ be a random set.  Then $\A=\{A(\omega)\}_{\omega\in\Omega}$ is called a \textrm{global random ${\mathcal{E}}-$attractor (or pullback $\mathcal{E}-$attractor)
} for $\{\v(t,\omega )\}_{t\geq 0,\omega \in \Omega }$ if
\begin{itemize}
\item[(i)]   $\,\,\,\,$ (compactness) $A(\omega )$ is a compact set of $\mathcal{X}$ for
any $\omega \in \Omega $;
\item[(ii)]   $\,\,\,\,$ (invariance) for any $\omega \in \Omega $ and all $t\geq 0$, it holds $$%
\v(t,\omega)A(\omega)=A(\theta _t\omega );$$
\item[(iii)]   $\,\,\,\,$ (attracting property) for any $E\in {\ \mathcal{E}}(\mathcal{X})$ and
a.e. $\omega \in \Omega $,
 $$
\lim\limits_{t\rightarrow \infty }\mathrm{dist}%
_\mathcal{X}(\v(t,\theta _{-t}\omega )E(\theta _{-t}\omega ),A(\omega ))=0,
$$
where $$\mathrm{dist}_\mathcal{X}(G, H) = \sup_{g \in G} \inf_{h \in H} \|g - h\|_\mathcal{X}$$ is the Hausdorff semi-metric for $G, H \subseteq \mathcal{X}.$
\end{itemize}
\end{definition}
\begin{proposition} \label{attractor} \cite{CLR2006,FS1996} Let $B \in \mathcal{E}(\mathcal{X})$ be a closed absorbing set for the continuous random dynamical 
system $\{\v(t,\omega)\}_{t \geq 0,\omega\in\Omega}$ that satisfies the asymptotic compactness condition for $a. e. \  \omega \in \Omega$, i.e., each sequence 
$x_n \in \v(t_n, \theta_{-t_n}\omega) B (\theta_{-t_n} \omega)$ has a convergent subsequence in $\mathcal{X}$ when $t_n \to \infty$.   Then $\v$ has a unique 
global random attractor $\A=\{A(\omega)\}_{\omega\in\Omega}$ with component subsets 
$$
A(\omega) = \bigcap_{\tau \geq T_B(\omega)} \overline{\bigcup_{t \geq \tau} \v(t, \theta_{-t} \omega) B(\theta_{-t} \omega)}.
$$
If the pullback absorbing set is positively invariant, i.e.,  $\v(t,\omega) B(\omega)$ $\subset$ $B(\theta_{t} \omega)$ for all $t$ $\geq$ $0$, then 
$$
A(\omega) = \bigcap_{t \geq 0}  \overline{\v(t, \theta_{-t} \omega) B(\theta_{-t} \omega)}.
$$
\end{proposition}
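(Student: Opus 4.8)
The plan is to identify the proposed component sets with the pullback $\omega$-limit set of the absorbing family $B$ and then to verify compactness, invariance, and attraction in turn, in the spirit of \cite{CLR2006,FS1996}. First I would record the sequential characterization of
$$A(\omega)=\bigcap_{\tau\geq T_B(\omega)}\overline{\bigcup_{t\geq\tau}\v(t,\theta_{-t}\omega)B(\theta_{-t}\omega)},$$
namely that $y\in A(\omega)$ if and only if there exist $t_n\to\infty$ and $x_n\in B(\theta_{-t_n}\omega)$ with $\v(t_n,\theta_{-t_n}\omega)x_n\to y$. This is the standard reformulation of a decreasing intersection of closures, and it converts each later step into a manipulation of sequences rather than of sets; I would establish it at the outset and invoke it repeatedly.

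Second, compactness and nonemptiness of $A(\omega)$ come from the asymptotic compactness hypothesis. Each set $\overline{\bigcup_{t\geq\tau}\v(t,\theta_{-t}\omega)B(\theta_{-t}\omega)}$ is closed, so $A(\omega)$ is closed as an intersection; fixing any $x_n\in B(\theta_{-t_n}\omega)$ with $t_n\to\infty$ gives, by asymptotic compactness, a convergent subsequence whose limit lies in $A(\omega)$, so $A(\omega)\neq\emptyset$, and the same extraction applied (via a diagonal argument) to an arbitrary sequence in $A(\omega)$ yields compactness. Measurability of $\omega\mapsto A(\omega)$, needed for $\A$ to be a genuine random set, is inherited from the measurability of $\v$ together with the explicit formula. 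For the attracting property I would argue by contradiction: if $A(\omega)$ failed to pullback-attract some $E\in\mathcal{E}(\mathcal{X})$, there would be $\delta>0$, $t_n\to\infty$ and $x_n\in E(\theta_{-t_n}\omega)$ with $\mathrm{dist}_\mathcal{X}(\v(t_n,\theta_{-t_n}\omega)x_n,A(\omega))\geq\delta$. Splitting $\v(t_n,\theta_{-t_n}\omega)=\v(\sigma,\theta_{-\sigma}\omega)\v(t_n-\sigma,\theta_{-t_n}\omega)$ by the cocycle property and using that $B$ absorbs $E$, the points $\v(t_n-\sigma,\theta_{-t_n}\omega)x_n$ enter $B(\theta_{-\sigma}\omega)$ as soon as $t_n-\sigma\geq T_E(\theta_{-\sigma}\omega)$; choosing $\sigma=\sigma_n\to\infty$ slowly (a diagonal choice that keeps $t_n-\sigma_n$ large) rewrites $\v(t_n,\theta_{-t_n}\omega)x_n$ as a $B$-orbit of growing pullback length, so asymptotic compactness produces a limit lying in $A(\omega)$, contradicting $\delta$.

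Third, and where I expect the genuine difficulty, is invariance $\v(t,\omega)A(\omega)=A(\theta_t\omega)$. The inclusion $\v(t,\omega)A(\omega)\subseteq A(\theta_t\omega)$ is immediate from the characterization, the continuity of $\v(t,\omega)$, and the identity $\v(t,\omega)\v(t_n,\theta_{-t_n}\omega)=\v(t+t_n,\theta_{-(t+t_n)}\theta_t\omega)$, which places $\lim\v(t,\omega)\v(t_n,\theta_{-t_n}\omega)x_n$ inside $A(\theta_t\omega)$. The reverse inclusion $A(\theta_t\omega)\subseteq\v(t,\omega)A(\omega)$ is the crux: given $z=\lim\v(s_n,\theta_{-s_n}\theta_t\omega)x_n\in A(\theta_t\omega)$ with $x_n\in B(\theta_{-s_n}\theta_t\omega)$, I would set $\tau_n=s_n-t\to\infty$, note $\theta_{-s_n}\theta_t\omega=\theta_{-\tau_n}\omega$, and factor $\v(s_n,\theta_{-s_n}\theta_t\omega)=\v(t,\omega)\v(\tau_n,\theta_{-\tau_n}\omega)$ by the cocycle property. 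Asymptotic compactness then furnishes a subsequential limit $\xi\in A(\omega)$ of the intermediate states $\v(\tau_n,\theta_{-\tau_n}\omega)x_n$, and continuity of $\v(t,\omega)$ gives $z=\v(t,\omega)\xi$. The delicate point is precisely the simultaneous control of the two limits — extracting $\xi$ from the non-compact state space via asymptotic compactness and then passing it through the continuous map $\v(t,\omega)$ — which forces the hypotheses to be used jointly rather than one at a time.

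Finally, uniqueness is a routine consequence of attraction and invariance: if $\A'$ were another such attractor, then $\mathrm{dist}_\mathcal{X}(A(\omega),A'(\omega))=\mathrm{dist}_\mathcal{X}(\v(t,\theta_{-t}\omega)A(\theta_{-t}\omega),A'(\omega))\to0$ forces $A(\omega)\subseteq A'(\omega)$, and symmetry gives equality. For the last assertion, positive invariance $\v(t,\omega)B(\omega)\subseteq B(\theta_t\omega)$ combined with the cocycle property shows that for $s\geq t\geq0$ one has $\v(s,\theta_{-s}\omega)B(\theta_{-s}\omega)\subseteq\v(t,\theta_{-t}\omega)B(\theta_{-t}\omega)$, so the family $\{\overline{\v(t,\theta_{-t}\omega)B(\theta_{-t}\omega)}\}_{t\geq0}$ is nested decreasingly. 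Each inner union in the formula for $A(\omega)$ then collapses to its initial term, and the outer intersection reduces to $\bigcap_{t\geq0}\overline{\v(t,\theta_{-t}\omega)B(\theta_{-t}\omega)}$, which is the claimed expression.
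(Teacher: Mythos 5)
Your proposal is correct: the paper itself states Proposition \ref{attractor} without proof, citing \cite{CLR2006,FS1996}, and your argument is precisely the standard one found in those references --- the sequential (pullback $\omega$-limit) characterization of $A(\omega)$, asymptotic compactness for nonemptiness/compactness and for the contradiction argument in attraction, the cocycle identity $\v(t,\omega)\v(s,\theta_{-s}\omega)=\v(t+s,\theta_{-s}\omega)$ for both inclusions of invariance, and the nestedness collapse in the positively invariant case. The only points glossed (measurability of $\omega\mapsto A(\omega)$, and the fact that uniqueness needs $A(\omega)\subseteq B(\omega)$ so that the attractor is itself tempered and hence attracted by any competitor) are likewise glossed in the cited sources and do not constitute gaps.
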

\begin{remark}\label{remark1}When the state space $\mathcal{X}$ $=$ $\mathbb{R}^d$ as in this paper, the asymptotic compactness follows trivially.   Note that the random attractor is path-wise attracting 
in the pullback sense, but does not need to be path-wise attracting in the forward sense,  although it is  forward attracting  in probability, due to some possible large deviations, see e.g. \cite{arnold}. 
\end{remark}

The next result ensures when two random dynamical systems are conjugated (see also \cite{caraballoconjugated2,caraballo-book,caraballoconjugated1}).

\begin{lemma}\label{lemmaconjugation}
Let $\varphi_u$ be a random dynamical system on $\mathcal{X}$. Suppose that the mapping $\mathcal{T}:\Omega\times \mathcal{X}\rightarrow \mathcal{X}$ possesses the following properties: for fixed $\omega\in\Omega$, $\mathcal{T}(\omega,\cdot)$ is a homeomorphism on $\mathcal{X}$, and for $x\in \mathcal{X}$, the mappings $\mathcal{T}(\cdot,x)$, $\mathcal{T}^{-1}(\cdot,x)$ are measurable. Then the mapping

$$(t,\omega,x)\rightarrow\varphi_v(t,\omega)x:=\mathcal{T}^{-1}(\theta_t\omega,\varphi_u(t,\omega)\mathcal{T}(\omega,x))$$
is a (conjugated) random dynamical system.
\end{lemma}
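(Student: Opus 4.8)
The plan is to verify directly that $\varphi_v$ meets the four defining requirements of a continuous RDS in Definition \ref{RDS}: joint measurability, continuity in the state variable, the identity property at $t=0$, and the cocycle property. Three of these are routine once the structural hypotheses on $\mathcal{T}$ are unpacked, and the measurability is the only delicate point; the whole proof is essentially an exercise in composition together with the cancellation $\mathcal{T}(\omega,\cdot)\circ\mathcal{T}^{-1}(\omega,\cdot)=\mathrm{Id}$.

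I would dispose of the three algebraic and topological properties first. Continuity of $x\mapsto\varphi_v(t,\omega)x$ for fixed $(t,\omega)$ is immediate, since this map is the composition $\mathcal{T}^{-1}(\theta_t\omega,\cdot)\circ\varphi_u(t,\omega)\circ\mathcal{T}(\omega,\cdot)$ of three continuous maps (the two homeomorphisms in the state variable and the continuous flow $\varphi_u(t,\omega)$). The identity property is a one-line substitution: using $\theta_0=\mathrm{Id}_\Omega$, $\varphi_u(0,\omega)=\mathrm{Id}$, and the fact that $\mathcal{T}^{-1}(\omega,\cdot)$ inverts $\mathcal{T}(\omega,\cdot)$, one gets $\varphi_v(0,\omega)x=\mathcal{T}^{-1}(\omega,\mathcal{T}(\omega,x))=x$. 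For the cocycle property I would expand $\varphi_v(t,\theta_s\omega)\varphi_v(s,\omega)x$ twice from the definition; the block $\mathcal{T}(\theta_s\omega,\cdot)$ cancels against the $\mathcal{T}^{-1}(\theta_s\omega,\cdot)$ produced by $\varphi_v(s,\omega)$, after which the flow law $\theta_t\circ\theta_s=\theta_{s+t}$ and the cocycle identity $\varphi_u(t,\theta_s\omega)\varphi_u(s,\omega)=\varphi_u(t+s,\omega)$ collapse the expression to $\mathcal{T}^{-1}(\theta_{t+s}\omega,\varphi_u(t+s,\omega)\mathcal{T}(\omega,x))=\varphi_v(t+s,\omega)x$.

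The main obstacle is the joint measurability of $(t,\omega,x)\mapsto\varphi_v(t,\omega)x$, because the outer transformation $\mathcal{T}^{-1}$ is evaluated at the \emph{pair} $(\theta_t\omega,\varphi_u(t,\omega)\mathcal{T}(\omega,x))$ rather than at a single composed argument, so one cannot naively invoke ``composition of measurable maps.'' The key observation is that $\mathcal{T}$ and $\mathcal{T}^{-1}$ are Carath\'eodory maps --- measurable in $\omega$ for fixed $x$ and continuous in $x$ for fixed $\omega$ by hypothesis --- and a Carath\'eodory map on a separable Banach space is jointly $(\mathcal{F}\times\mathcal{B}(\mathcal{X}))$-measurable. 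Granting this, I would assemble the result by composition: the map $(t,\omega,x)\mapsto\mathcal{T}(\omega,x)$ is jointly measurable, hence so is its composition with the jointly measurable $\varphi_u$, giving joint measurability of $(t,\omega,x)\mapsto\varphi_u(t,\omega)\mathcal{T}(\omega,x)$; together with measurability of $(t,\omega)\mapsto\theta_t\omega$ this makes the $\Omega\times\mathcal{X}$-valued map $(t,\omega,x)\mapsto\bigl(\theta_t\omega,\varphi_u(t,\omega)\mathcal{T}(\omega,x)\bigr)$ measurable, and precomposing the product-measurable $\mathcal{T}^{-1}$ with it yields joint measurability of $\varphi_v$. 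The only genuinely nontrivial input is therefore the Carath\'eodory-implies-jointly-measurable step, which I would either prove by approximating the state argument with simple functions and exploiting continuity of $\mathcal{T}^{-1}(\omega,\cdot)$, or simply cite as the standard measurability result for superposition (Nemytskii) operators.
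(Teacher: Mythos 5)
Your proof is correct. Note that the paper itself offers no proof of this lemma --- it is stated as a known result with pointers to \cite{caraballoconjugated2,caraballo-book,caraballoconjugated1} --- and your argument (routine verification of continuity, identity and cocycle properties via the cancellation $\mathcal{T}(\omega,\cdot)\circ\mathcal{T}^{-1}(\omega,\cdot)=\mathrm{Id}$, plus the Carath\'eodory joint-measurability step for $\mathcal{T}$ and $\mathcal{T}^{-1}$, which is legitimate here because $\mathcal{X}$ is a separable Banach space) is precisely the standard proof found in those references, so there is nothing to flag.
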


%%%%%%%%%%%%

\section{Random chemostat}\label{randomchemostat}

In this section we will investigate the stochastic system \eqref{5}-\eqref{6}. To this end, we first transform it into differential equations with random coefficients and without white noise.\n

Let $W$ be a two sided Wiener process. Kolmogorov's theorem ensures that $W$ has a continuous version, that we will denote by $\omega$, whose canonical interpretation is as follows: let $\Omega$ be defined by $$\Omega =\{\omega \in \C(\R, \R): \omega(0) = 0\} = \C_0(\R, \R),$$  $\F$ be the Borel $\sigma-$algebra on $\Omega$ generated by the compact open topology (see \cite{arnold} for details) and $\P$ the corresponding Wiener measure on $\F$.  We consider the Wiener shift flow given by $$\theta_t \omega(\cdot) = \omega(\cdot + t) - \omega(t),\quad t\in \ensuremath{\mathbb{R}},$$ then $(\Omega, \F, \P, \{\theta_t\}_{t \in \R})$ is a metric dynamical system. Now let us introduce the following Ornstein-Uhlenbeck process on $%
(\Omega,\mathcal{F},\ensuremath{\mathbb{P}},\{\theta _t\}_{t\in %
\ensuremath{\mathbb{R}}})$ 
\begin{equation*}
z^*(\theta _t\omega )=-\int\limits_{-\infty }^0e^s\theta _t\omega (s)%
ds,\quad t\in \ensuremath{\mathbb{R}},\quad \omega \in \Omega, \label{delta}
\end{equation*}
which solves the following Langevin equation  (see e.g. \cite{arnold,CL})
\begin{eqnarray*}
dz+zdt =d\omega(t),\quad  t\in\mathbb{R}. \label{OU}
\end{eqnarray*}

\begin{proposition}
\label{property-delta-lm1} (\cite{arnold,CL}) There exists a $
\theta _t$-invariant set $\widetilde{\Omega }\in \mathcal{F}$
 of  $\Omega$  of full $\ensuremath{\mathbb{P}}$ measure such that for $\omega \in
\widetilde{\Omega },$  we have
\begin{itemize}
\item [(i)] $\,\,$ the random variable  $|z^*(\omega )|$  is tempered.

\item [(ii)] $\,\,$ the mapping 
\[
(t,\omega )\rightarrow z^*(\theta _t\omega )=-\int\limits_{-\infty
}^0e^s\omega (t+s)\mathrm{d}s+\omega(t)
\]
is a stationary solution of \eqref{OU}
with continuous trajectories;\n

\item [(iii)] $\,\,$ in addition, for any $\omega \in \tilde \Omega$:
\begin{eqnarray*}
\lim_{t\rightarrow \pm \infty }\frac{|z^*(\theta _t\omega )|}%
t&=& 0;\\
\lim_{t\rightarrow \pm \infty }\frac 1t\int_0^tz^*(\theta _s\omega
)ds&=&0;\\
 \lim_{t\rightarrow \pm \infty }\frac 1t\int_0^t |z^*(\theta _s\omega
)| ds&=& \mathbb{E}[z^*] < \infty.
\end{eqnarray*}
\end{itemize}
\end{proposition}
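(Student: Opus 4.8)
The plan is to combine two classical ingredients: the pathwise growth of Brownian sample paths and Birkhoff's ergodic theorem for the metric dynamical system $(\Omega,\F,\P,\{\theta_t\})$, which is ergodic because the Wiener shift is. First I would fix the exceptional set. The starting point is the sublinear growth of Brownian paths, namely $\lim_{s\to\pm\infty}\omega(s)/s=0$ for $\P$-a.e. $\omega$ (a consequence of the law of the iterated logarithm). On the set where this holds, the integrand $e^{s}\omega(s)$ is absolutely integrable on $(-\infty,0]$, so $z^*(\omega)$ is well defined; moreover, inserting $\theta_t\omega(s)=\omega(t+s)-\omega(t)$ and using $\int_{-\infty}^0 e^{s}\,ds=1$ gives at once the identity $z^*(\theta_t\omega)=-\int_{-\infty}^0 e^{s}\omega(t+s)\,ds+\omega(t)$, which is the representation asserted in (ii). Joint continuity of $(t,\omega)\mapsto z^*(\theta_t\omega)$ follows from continuity of $\omega$ and dominated convergence in the integral, and the fact that this process solves the Langevin equation $dz+z\,dt=d\omega$ follows by differentiating the integral representation (equivalently, from the variation-of-constants form of the O--U process together with an integration by parts).

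The heart of the argument is item (iii). Since $z^*$ is a centered Gaussian random variable with finite variance, one computes directly $\E[z^*]=-\int_{-\infty}^0 e^{s}\,\E[\omega(s)]\,ds=0$ and $\E[|z^*|]<\infty$. The second and third limits are then immediate instances of Birkhoff's ergodic theorem applied to $z^*$ and to $|z^*|$ respectively: by ergodicity of $\theta$, $\frac1t\int_0^t z^*(\theta_s\omega)\,ds\to\E[z^*]=0$ and $\frac1t\int_0^t |z^*(\theta_s\omega)|\,ds\to\E[|z^*|]$ as $t\to\pm\infty$. For the first limit I would \emph{not} apply the ergodic theorem to the pointwise quantity $z^*(\theta_t\omega)$, but instead introduce the auxiliary variable $g(\omega)=\sup_{0\le s\le 1}|z^*(\theta_s\omega)|$, verify $\E[g]<\infty$ (using the Gaussian tail of $\sup_{[0,1]}|\omega|$ to control both terms of the representation), and apply Birkhoff to $g$: from $\tfrac1n\sum_{k=0}^{n-1}g(\theta_k\omega)\to\E[g]$ one obtains $g(\theta_n\omega)/n\to0$, and hence, via $|z^*(\theta_t\omega)|\le g(\theta_n\omega)$ for $t\in[n,n+1]$, the sublinear decay $|z^*(\theta_t\omega)|/|t|\to0$.

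Temperedness in (i) then comes for free from the very same estimate $\E[g]<\infty$: the standard sufficient condition states that a random variable $r$ with $\E[\sup_{0\le s\le 1}|r(\theta_s\cdot)|]<\infty$ is tempered, equivalently $\tfrac1{|t|}\log^{+}|r(\theta_t\omega)|\to0$, which is weaker than the sublinear bound just established. The \textbf{main obstacle is bookkeeping rather than analysis}: each of Birkhoff's theorem and the path-growth statement holds only off a $\P$-null set, whereas the proposition requires a \emph{single} set $\widetilde\Omega$ that is $\theta_t$-invariant for \emph{all} $t\in\R$ and on which all the limits hold simultaneously. I would therefore take the intersection of the $\theta$-orbits of these null sets (using $\theta_t\P=\P$), and crucially use the continuity of $t\mapsto z^*(\theta_t\omega)$ to upgrade the countably-many-$t$ convergence statements coming from the ergodic theorem to statements valid for every real $t$, thereby producing the desired invariant full-measure set $\widetilde\Omega$.
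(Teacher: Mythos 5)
Your proof is correct, but note that the paper itself contains no proof of this proposition: it is quoted verbatim from the references \cite{arnold,CL}, so the only meaningful comparison is with the standard argument in those sources, which your proposal reconstructs faithfully — law-of-the-iterated-logarithm growth of paths to make $z^*$ well defined and obtain the integral representation, Birkhoff's ergodic theorem (applied to $z^*$, $|z^*|$, and to the unit-interval supremum $g(\omega)=\sup_{0\le s\le 1}|z^*(\theta_s\omega)|$) for the three limits in (iii), the subexponential-growth characterization for temperedness in (i), and the intersection-plus-continuity bookkeeping to produce a single $\theta_t$-invariant full-measure set $\widetilde{\Omega}$. You also silently (and correctly) repair the statement's slight abuse of notation, reading the last limit as $\mathbb{E}[|z^*|]$ rather than $\mathbb{E}[z^*]$.
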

In what follows we will consider the restriction of the Wiener shift $\theta$ to the set $\tilde \Omega$, and we restrict accordingly the metric dynamical system to this set, that is also a metric dynamical system, see \cite{caraballoconjugated2}. For simplicity, we will still denote the restricted metric dynamical system by the old symbols $(\Omega,\mathcal{F},\ensuremath{\mathbb{P}},\{\theta _t\}_{t\in %
\ensuremath{\mathbb{R}}})$.

%%%%%%%%%%%%%%%%%
%%%%%%%%%%%%%%%%%

\subsection{Stochastic chemostat becomes a random chemostat}

In what follows we use the Ornstein-Uhlenbeck process to transform \eqref{5}-\eqref{6} into a random system. Let us note that analyzing the equilibria we obtain that the only one is the axial equilibrium $(S^0,0)$ and then we define two new variables $\sigma$ and $\kappa$ by

\begin{eqnarray}
\sigma(t) &=&  (S(t)-S^0)e^{\alpha z^*(\theta_t\omega)}, \label{vcsigma}
\\[1.3ex]
\kappa(t) &=& x(t)e^{\alpha z^*(\theta_t\omega)}. \label{vckappa}
\end{eqnarray}

For the sake of simplicity we will write $z^*$ instead of $z^*(\theta_t\omega)$, and $\sigma$ and $\kappa$ instead of $\sigma(t)$ and $\kappa(t)$.\newline

On the one hand, by differentiation, we have
\begin{eqnarray}
\nonumber
d\sigma&=&e^{\alpha z^*(\theta_t\omega)}\cdot dS+\alpha(S-S^0)e^{\alpha z^*(\theta_t\omega)}[-z^*dt+dW]
\\[1.3ex]
\nonumber
&=&-\bar{D}\sigma dt-\frac{m(S^0+\sigma e^{-\alpha z^*(\theta_t\omega)})}{a+S^0+\sigma e^{-\alpha z^*(\theta_t\omega)}}\kappa dt-\alpha z^*\sigma dt.
\end{eqnarray}

On the other hand, we obtain
\begin{eqnarray}
\nonumber
d\kappa&=&e^{\alpha z^*(\theta_t\omega)}\cdot dx+\alpha xe^{\alpha z^*(\theta_t\omega)}[-z^*dt+dW]
\\[1.3ex]
\nonumber
&=&\frac{m(S^0+\sigma e^{-\alpha z^*(\theta_t\omega)})}{a+S^0+e^{-\alpha z^*(\theta_t\omega)}}\kappa dt-\bar{D}\kappa dt-\alpha z^*\kappa dt.
\end{eqnarray}

Thus, we deduce the following random system 
\begin{eqnarray}
\frac{d\sigma}{dt}&=&-(\bar{D}+\alpha z^*)\sigma -\frac{m(S^0+\sigma e^{-\alpha z^*(\theta_t\omega)})}{a+S^0+\sigma e^{-\alpha z^*(\theta_t\omega)}}\kappa,\label{7}
\\[1.3ex]
\frac{d\kappa}{dt}&=&-(\bar{D}+\alpha z^*)\kappa+\frac{m(S^0+\sigma e^{-\alpha z^*(\theta_t\omega)})}{a+S^0+\sigma e^{-\alpha z^*(\theta_t\omega)}}\kappa.\label{8}
\end{eqnarray}

%%%%%%%%%%%%%%%%%

\subsection{Random chemostat generates an RDS}

Next we prove that the random chemostat given by \eqref{7}-\eqref{8} generates an RDS. From now on, we will denote $\mathcal X:=\{(x,y)\in\R^2\,:\, x\in\R,\, y\geq 0\}$, the upper-half plane.

\begin{theorem}\label{theorem1}
For any $\omega\in\Omega$ and any initial value $u_0:=(\sigma_0,\kappa_0)\in \mathcal X$, where $\sigma_0:=\sigma(0)$ and $\kappa_0:=\kappa(0)$, system \eqref{7}-\eqref{8} possesses a unique global solution $u(\cdot;0,\omega,u_0):=(\sigma(\cdot;0,\omega,u_0) , \kappa (\cdot;0,\omega,u_0))\in\C^1([0,+\infty),\mathcal X)$ with $u(0;0,\omega,u_0)=u_0$. Moreover, the solution mapping generates a RDS $\varphi_u:\mathbb R^+\times \Omega\times \mathcal X \rightarrow \mathcal X$ defined as
$$\varphi_u(t,\omega)u_0:=u(t;0,\omega,u_0),\quad \text{for all}\,\, t\in \mathbb R^+, \, u_0\in\mathcal X,\, \omega\in\Omega,$$
\noindent the value at time $t$ of the solution of system \eqref{7}-\eqref{8} with initial value $u_0$ at time zero.
\end{theorem}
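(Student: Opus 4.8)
The plan is to establish local existence and uniqueness via the standard Picard–Lindelöf theory, then rule out finite-time blow-up to obtain global solutions, then verify positivity/invariance of the state space $\mathcal X$, and finally check the cocycle and measurability properties that promote the solution map to an RDS.

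First I would fix $\omega\in\Omega$ and treat \eqref{7}-\eqref{8} as a non-autonomous ODE $\frac{du}{dt}=F(t,u)$ on $\mathcal X$, where the time dependence enters only through the continuous function $t\mapsto z^*(\theta_t\omega)$ (continuous by Proposition \ref{property-delta-lm1}(ii)). The right-hand side $F$ is smooth in $u=(\sigma,\kappa)$ on the region where the denominator $a+S^0+\sigma e^{-\alpha z^*}$ does not vanish; since $a,S^0>0$ and we work with $S=S^0+\sigma e^{-\alpha z^*}\geq 0$ (i.e. the original nutrient concentration is nonnegative), the denominator stays bounded below by $a>0$ and $F$ is locally Lipschitz in $u$, uniformly on compact time intervals. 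The Carathéodory/Picard–Lindelöf theorem then yields a unique maximal $\C^1$ solution on some interval $[0,t_{\max})$.

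Next I would show $t_{\max}=+\infty$ by deriving an a priori bound precluding blow-up. The $\kappa$-equation is linear in $\kappa$ with a continuous (hence locally bounded) coefficient $-(\bar D+\alpha z^*)+\frac{m(S^0+\sigma e^{-\alpha z^*})}{a+S^0+\sigma e^{-\alpha z^*}}$, so $\kappa$ cannot blow up in finite time and, starting from $\kappa_0\geq 0$, remains nonnegative for all $t$ (if $\kappa_0=0$ then $\kappa\equiv 0$; otherwise $\kappa>0$). With $\kappa$ controlled, the $\sigma$-equation gives a linear-growth bound on $|\sigma|$: the functional-response factor is bounded by $m$, so $\frac{d}{dt}|\sigma|\leq (\bar D+\alpha|z^*|)|\sigma|+m\kappa$, and Grönwall's inequality keeps $|\sigma|$ finite on every bounded interval. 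Hence the solution exists globally and stays in $\mathcal X$, so invariance (ii) of the solution-space requirement holds.

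Finally I would verify the RDS axioms of Definition \ref{RDS}. The initial condition $\varphi_u(0,\omega)u_0=u_0$ is immediate, and continuity of $u_0\mapsto\varphi_u(t,\omega)u_0$ follows from continuous dependence on initial data in the Picard theory. The cocycle property $\varphi_u(t+s,\omega)=\varphi_u(t,\theta_s\omega)\varphi_u(s,\omega)$ reduces to the identity $z^*(\theta_{t}(\theta_s\omega))=z^*(\theta_{t+s}\omega)$, which holds because $\theta$ is a flow, so that the solution started at time $s$ from $\omega$ coincides, after the shift, with a solution started at time $0$ from $\theta_s\omega$; uniqueness then forces the two to agree. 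Joint measurability of $(t,\omega,u_0)\mapsto\varphi_u(t,\omega)u_0$ follows from continuity in $(t,u_0)$ together with measurability of $\omega\mapsto z^*(\theta_t\omega)$. \textbf{The main obstacle} I anticipate is the global-existence step: one must be careful that the nonlinearity's denominator never degenerates and that $\sigma$ does not escape to $-\infty$ (where $S$ would become negative), so the crux is choosing the right a priori estimate—controlling $\kappa\geq 0$ first and then bounding $\sigma$ linearly—rather than the essentially routine verification of the cocycle and measurability properties.
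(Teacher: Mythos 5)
Your skeleton (local Picard--Lindel\"of, a priori bounds, then the RDS axioms) is reasonable, but there is a genuine gap at exactly the point you yourself flag as ``the main obstacle'': you never prove that the denominator $a+S^0+\sigma e^{-\alpha z^*(\theta_t\omega)}$ stays away from zero along solutions. You dispose of it by declaring that ``we work with $S=S^0+\sigma e^{-\alpha z^*}\geq 0$'', but this is not available: the state space is $\mathcal X=\{(\sigma,\kappa):\sigma\in\R,\,\kappa\geq 0\}$, which contains points with $S<0$, and --- more importantly --- the region $\{S\geq 0\}$ is \emph{not} invariant for \eqref{7}-\eqref{8}. Indeed, the paper's own analysis (Case 2 of the proof of Theorem \ref{theorem1} and the closing Remark) shows that solutions do enter $S<0$; in the original stochastic model the noise coefficient $\alpha(S^0-S)$ does not vanish at $S=0$, so nonnegativity of the substrate is lost. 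Once this assumption is removed, all three estimates built on it collapse: (i) the local Lipschitz claim ``denominator bounded below by $a$''; (ii) the claim that the coefficient of the linear $\kappa$-equation is locally bounded, hence $\kappa$ cannot blow up --- that coefficient contains $\mu(S)=mS/(a+S)$ evaluated along $\sigma(t)$ and blows up as $\sigma$ approaches the singular set, so your two-step estimate is circular (bounding $\kappa$ requires $\sigma$ to stay away from the singular set, while your bound on $\sigma$ requires $\kappa$ already controlled); and (iii) the bound $|\mu|\leq m$ used in your Gr\"onwall inequality, which holds only for $S\geq 0$ (as $S\to -a^{+}$ one has $\mu(S)\to-\infty$). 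Note also that for an ODE whose right-hand side has a singular set, excluding $|u|\to\infty$ is not enough for globality: the maximal solution can also terminate by reaching the set where the vector field is undefined, and that is precisely the scenario your argument does not exclude.

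The paper closes this gap with two ingredients absent from your proposal. First, the quantity $Q=\sigma+\kappa$ obeys a \emph{linear scalar} equation with the explicit bounded solution \eqref{Q}; this replaces your componentwise Gr\"onwall estimates and bounds $\sigma+\kappa$ with no reference to the nonlinear term at all. Second, and crucially, the bound \eqref{sigmabound} is proved by a repulsion/contradiction argument: if $\sigma$ were to reach the singular value $-(a+S^0)e^{\alpha z^*(\theta_{\bar t}\omega)}$ at some time $\bar t$, then just before $\bar t$ the numerator $S^0+\sigma e^{-\alpha z^*}$ is close to $-a<0$ while the denominator is a small positive number, so the term $-\mu(S)\kappa$ in \eqref{7} is large and positive and forces $d\sigma/dt>0$, contradicting that $\sigma$ decreases into that set. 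This is the step that keeps the vector field locally Lipschitz along the whole trajectory and makes global existence work for \emph{all} initial data in $\mathcal X$ handled by the paper; without it, or a substitute for it, your proof does not go through. Your remaining points --- nonnegativity of $\kappa$ via uniqueness (since $\kappa\equiv 0$ solves the system) and the verification of the cocycle and measurability properties --- are fine and coincide with what the paper does (or omits as routine).
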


\begin{proof}Observe that we can rewrite one of the terms in the previous equations as

\begin{eqnarray*}
\frac{m(S^0+\sigma\ezm)}{a+S^0+\sigma\ezm}\kappa &=& \frac{m(S^0+\sigma\ezm+a-a)}{a+S^0+\sigma\ezm}\kappa=m\kappa-\frac{ma\kappa}{a+S^0+\sigma\ezm}
\end{eqnarray*}
and therefore system \eqref{7}-\eqref{8} turns into

\begin{eqnarray}
\f{d\sigma}{dt} &=& -(\bar{D}+\alpha z^*)\sigma-m\kappa+\frac{ma}{a+S^0+\sigma\ezm}\kappa, \label{a1}
\\[1.3ex]
\f{d\kappa}{dt} &=& -(\bar{D}+\alpha z^*)\kappa+m\kappa-\frac{ma}{a+S^0+\sigma\ezm}\kappa. \label{a2}
\end{eqnarray}

Denoting  $u(\cdot;0,\omega,u_0):=(\sigma(\cdot;0,\omega,u_0) , \kappa (\cdot;0,\omega,u_0))$, system \eqref{a1}-\eqref{a2} can be rewritten as
\begin{eqnarray*}
\f{du}{dt} &=& L(\theta_t\omega)\cdot u+F(u,\theta_t\omega), \label{9}
\end{eqnarray*}
\noindent where
\begin{eqnarray*}
L(\theta_t\omega) &=& \left(\begin{array}{cc}
					   -(\bar{D}+\alpha z^*) & -m \\
					   0 & -(\bar{D}+\alpha z^*)+m
					   \end{array}\right)
\end{eqnarray*}
\noindent and $F:\mathcal X\times[0,+\infty)\longrightarrow\R^2$ is given by
\begin{eqnarray*}
F(\xi,\theta_t\omega) &=& \left(\begin{array}{c}
					   \displaystyle{\frac{ma}{a+S^0+\xi_1e^{-\alpha z^*(\theta_t\omega)}}\xi_2}\\
					   \displaystyle{\frac{-ma}{a+S^0+\xi_1e^{-\alpha z^*(\theta_t\omega)}}\xi_2}
					   \end{array}\right),
\end{eqnarray*}
\noindent where $\xi=(\xi_1,\xi_2)\in\mathcal X$.\n

Since $z^*(\theta_t\omega)$ is continuous, $L$ generates an evolution system on $\R^2$. Moreover, we notice that 
\begin{eqnarray*}
\f{\partial}{\partial \xi_2}\left[\pm\frac{am}{a+S^0+\xi_1\ezm}\xi_2\right] &=& \pm\frac{am}{a+S^0+\xi_1\ezm}
\end{eqnarray*}
and

\begin{eqnarray*}
\f{\partial}{\partial \xi_1}\left[\pm\frac{am}{a+S^0+\xi_1\ezm}\xi_2\right] &=& \mp\frac{am\ezm}{(a+S^0+\xi_1\ezm)^2}\xi_2
\end{eqnarray*}
\noindent thus $F(\cdot,\theta_t\omega)\in\C^1(\mathcal X \times[0,+\infty);\R^2)$ which implies that it is locally Lipschitz with respect to $(\xi_1,\xi_2)\in\mathcal X$. Therefore, thanks to classical results from the theory of ordinary differential equations, system \eqref{7}-\eqref{8} possesses a unique local solution. Now, we are going to prove that the unique local solution of system \eqref{7}-\eqref{8} is in fact a unique global one.\n

By defining $Q(t):=\sigma(t)+\kappa(t)$ it is easy to check that $Q$ satisfies the differential equation
\begin{eqnarray}
\nonumber
\frac{dQ}{dt}&=&-(\bar{D}+\alpha z^*)Q,
\end{eqnarray}
\noindent whose solution is given by the following expression
\begin{eqnarray}
Q(t;0,\omega,Q(0))&=&Q(0)e^{-\bar{D}t-\alpha\int_0^tz^*(\theta_s\omega)ds}.\label{Q}
\end{eqnarray}

The right side of \eqref{Q} always tends to zero when $t$ goes to infinity since $\bar{D}$ is positive, thus $Q$ is clearly bounded. Moreover, since
$$\left.\frac{d\sigma}{dt}\right|_{\sigma=0}=-\frac{mS^0}{a+S^0}\kappa<0$$
\noindent we deduce that, if there exists some $t^*>0$ such that $\sigma(t^*)=0$, we will have $\sigma(t)<0$ for all $t>t^*$. Because of the previous reasoning, we will split our analysis into two different cases.\n

\begin{itemize}
\item {\bf Case 1. $\sigma(t)>0$ for all $t\geq 0$:} in this case, from \eqref{7} we obtain
\begin{eqnarray}
\nonumber
\frac{d\sigma}{dt}&\leq& -(\bar{D}+\alpha z^*)\sigma
\end{eqnarray}
\noindent whose solutions should satisfy
\begin{eqnarray}
\sigma(t;0,\omega,\sigma(0))&\leq&\sigma(0)e^{-\bar{D}t-\alpha\int_0^tz^*(\theta_s\omega)ds} \label{sigma}.
\end{eqnarray}

Since $\bar{D}$ is positive, we deduce that $\sigma$ tends to zero when $t$ goes to infinity, hence $\sigma$ is bounded.

\item {\bf Case 2. there exists $t^*>0$ such that $\sigma(t^*)=0$:} in this case, we already know that $\sigma(t)<0$ for all $t>t^*$ and we claim that the following bound for $\sigma$ holds true
\begin{equation}
\sigma(t;0,\omega,\sigma(0))>-(a+S^0)e^{\alpha z^*(\theta_t\omega)}.\label{sigmabound}
\end{equation}

To prove \eqref{sigmabound}, we suppose that there exists $\bar{t}>t^*>0$ such that $$a+S^0+\sigma(\bar{t})e^{-\alpha z^*(\theta_{\bar{t}}\omega)}=0,$$
\noindent then we can find some $\varepsilon(\omega)>0$ small enough such that $\sigma(t)$ is strictly decreasing and
\begin{equation}
-(\bar{D}+\alpha z^*(\theta_t\omega))-\f{m(S^0+\sigma(t)e^{-\alpha z^*(\theta_t\omega)})}{a+S^0+\sigma(t)e^{-\alpha z^*(\theta_t\omega)}}\kappa(t)>0\label{c}
\end{equation}
\noindent holds for all $t\in[\bar{t}-\varepsilon(\omega),\bar{t})$. Hence, from \eqref{c} we have
$$\f{d\sigma}{dt}(\bar{t}-\varepsilon(\omega))>0,$$
\noindent thus there exists some $\delta(\omega)>0$ small enough such that $\sigma(t)$ is strictly increasing for all $t\in[\bar{t}-\varepsilon(\omega),\bar{t}-\varepsilon(\omega)+\delta(\omega))$, which clearly contradicts  the uniqueness of solution. Hence, \eqref{sigmabound} holds true for all $t\in\mathbb{R}$ and we can also ensure that $\sigma$ is bounded.
\end{itemize}

Since $\sigma+\kappa$ and $\sigma$ are bounded in both cases, $\kappa$ is also bounded. Hence, the unique local solution of system \eqref{7}-\eqref{8} is a unique global one. Moreover, the unique global solution of system \eqref{7}-\eqref{8} remains in $\mathcal{X}$ for every initial value in $\mathcal{X}$ since $\kappa\equiv 0$ solves the same system.\n

Finally, the mapping $\varphi_u: \mathbb R^+\times \Omega \times \mathcal X \rightarrow \mathcal X$ given by
\begin{eqnarray*}
\varphi_u(t,\omega)u_0 &:=& u(t;0,\omega,u_0),\quad \text{for all}\,\, t\geq 0, \,\, u_0\in\mathcal X,\,\, \omega\in\Omega, \label{ds}
\end{eqnarray*}
\noindent defines a RDS generated by the solution of \eqref{7}-\eqref{8}. The proof of this statement follows trivially hence we omit it.\n
\qed
\end{proof}

\subsection{Existence of the pullback random attractor}

Now, we study the existence of the pullback random attractor, describing its internal structure explicitly.

\begin{theorem}\label{t2}
There exists, for any $\varepsilon>0$, a tempered compact random absorbing set $B_\varepsilon(\omega)\in\mathcal{E}(\mathcal X)$ for the RDS $\{\varphi_u(t,\omega)\}_{t\geq 0,\,\omega\in\Omega}$, that is, for any $E(\theta_{-t}\omega)\in\mathcal{E}(\mathcal{X})$ and each $\omega\in\Omega$, there exists $T_E(\omega,\varepsilon)>0$ such that $$\varphi_u(t,\theta_{-t}\omega)E(\theta_{-t}\omega)\subseteq B_\varepsilon(\omega),\quad\quad\text{for all}\,\, t\geq T_E(\omega,\varepsilon).$$
\end{theorem}

\begin{proof}Thanks to \eqref{Q}, we have
\begin{eqnarray}
\nonumber
Q(t;0,\fibra,Q(0))&=&Q(0)e^{-\bar{D}t-\alpha\int_{-t}^0z^*(\theta_s\omega)ds}\stackrel{\quad t\rightarrow+\infty\quad}{\longrightarrow}0.
\end{eqnarray}

Then, for any $\varepsilon>0$ and $u_0\in E(\fibra)$ there exists $T_E(\omega,\varepsilon)>0$ such that, for all $t\geq T_E(\omega,\varepsilon)$, we obtain
$$-\varepsilon\leq Q(t;0,\fibra,u_0)\leq \varepsilon.$$

If we assume that $\sigma(t)\geq 0$ for all $t\geq 0$, which corresponds to {\bf Case 1} in the proof of Theorem \ref{theorem1}, since $\kappa(t)\geq 0$ for all $t\geq 0$, we have that
$$B^1_\varepsilon(\omega):=\left\{(\sigma,\kappa)\in\mathcal{X}\,\,:\,\, \sigma\geq 0,\, \sigma+\kappa\leq\varepsilon\right\}$$
\noindent is a tempered compact random absorbing set in $\mathcal{X}$.\n

In the other case, i.e., if there exists some $t^*>0$ such that $\sigma(t^*)=0$, which corresponds to {\bf Case 2} in the proof of Theorem \ref{theorem1}, we proved that 
\begin{equation*}
\sigma(t;0,\fibra,u_0)>-(a+S^0)e^{\alpha z^*(\omega)}.
\end{equation*}

Hence, we obtain that
$$B^2_\varepsilon(\omega):=\left\{(\sigma,\kappa)\in\mathcal{X}\,\,:\,\, -\varepsilon-(a+S^0)e^{\alpha z^*(\omega)}\leq\sigma\leq 0,\, -\varepsilon\leq\sigma+\kappa\leq\varepsilon\right\}$$
\noindent is a tempered compact random absorbing set in $\mathcal{X}$.\n

In conclusion, defining
\begin{equation*}
B_\varepsilon(\omega)=B^1_\varepsilon(\omega)\cup B^2_\varepsilon(\omega)=\left\{(\sigma,\kappa)\in\mathcal{X}\,\,:\,\,-\varepsilon\leq\sigma+\kappa\leq\varepsilon,\, \sigma\geq-(a+S^0)e^{\alpha z^*(\omega)}-\varepsilon\right\},
\end{equation*}
\noindent we obtain (see Figure \ref{figure1}) that $B_\varepsilon(\omega)$ is a tempered compact random absorbing set in $\mathcal{X}$ for every $\varepsilon>0$.

\begin{multicols}{2}
\begin{figure}[H]
\begin{center}
\psscalebox{0.75 0.75} % Change this value to rescale the drawing.
{
\begin{pspicture}(0,-4.430296)(10.480869,4.430296)
\definecolor{colour0}{rgb}{0.0,0.6,1.0}
\definecolor{colour1}{rgb}{1.0,0.2,0.2}
\definecolor{colour2}{RGB}{0,102,0}
\pspolygon[linecolor=white, linewidth=0.04, fillstyle=solid,fillcolor=blue](2.3836365,-1.6571428)(5.637391,-1.6650479)(1.1915416,2.8242803)(1.2018182,-0.16623364)(1.2018182,-0.45714274)
\rput[l](10.2808695,-1.5828345){\large $\sigma$}
\rput[b](3.9796839,4.2242804){\large $\kappa$}
\psline[linecolor=colour1, linewidth=0.04, linestyle=dashed, dash=0.17638889cm 0.10583334cm](1.2018182,3.4883118)(1.2018182,-3.238961)
\rput[t](1.18917,-3.40655){\large $-(a+S^0)e^{\alpha z^*(\omega)}-\varepsilon$}
\pspolygon[linecolor=white, linewidth=0.04, fillstyle=solid,fillcolor=colour2](3.9931226,0.004517342)(3.992332,-1.6753246)(5.637391,-1.666629)
\psline[linecolor=black, linewidth=0.04, dotsize=0.07055555555555555cm 2.0,arrowsize=0.05291666666666667cm 2.0,arrowlength=1.4,arrowinset=0.0]{*->}(0.0,-1.6571428)(10.14,-1.6571428)
\psline[linecolor=black, linewidth=0.04, dotsize=0.07055555555555555cm 2.0,arrowsize=0.05291666666666667cm 2.0,arrowlength=1.4,arrowinset=0.0]{*->}(3.9836364,-4.4302654)(4.001818,4.1246753)
\rput[t](5.532253,-1.85){\large $\varepsilon$}
\rput[t](2.4160473,-1.76){\large $-\varepsilon$}
\rput[t](2.5,0.1){\large \bf \textcolor{white}{$B^2_\varepsilon(\omega)$}}
\rput[t](4.5,-1){\bf \textcolor{white}{$B^1_\varepsilon(\omega)$}}
\end{pspicture}
}
\end{center}
\caption{Absorbing set \bf $B_\varepsilon(\omega):=\textcolor[RGB]{0,102,0}{B^2_\varepsilon(\omega)}\cup\textcolor{blue}{B^1_\varepsilon(\omega)}$}
\label{figure1}
\end{figure}

\columnbreak

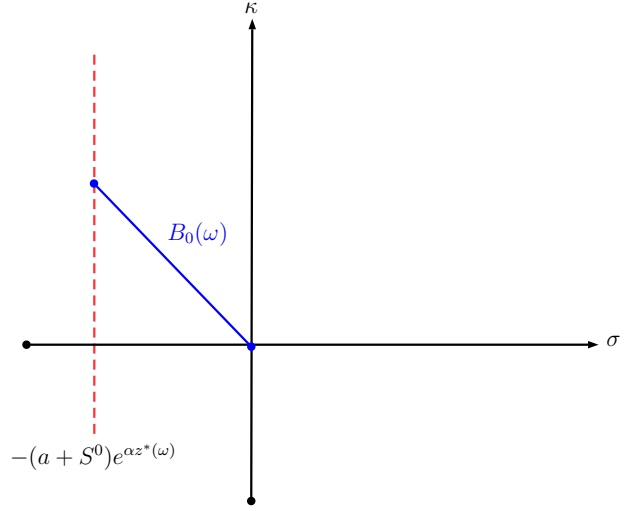
\begin{figure}[H]
\begin{center}
\psscalebox{0.75 0.75} % Change this value to rescale the drawing.
{
\begin{pspicture}(0,-4.430296)(10.480869,4.430296)
\definecolor{colour1}{rgb}{1.0,0.2,0.2}
\rput[l](10.2808695,-1.5828345){\large $\sigma$}
\rput[b](3.9796839,4.2242804){\large $\kappa$}
\psline[linecolor=colour1, linewidth=0.04, linestyle=dashed, dash=0.17638889cm 0.10583334cm](1.2018182,3.4883118)(1.2018182,-3.238961)
\rput[t](1.18917,-3.40655){\large $-(a+S^0)e^{\alpha z^*(\omega)}$}
\psline[linecolor=black, linewidth=0.04, dotsize=0.07055555555555555cm 2.0,arrowsize=0.05291666666666667cm 2.0,arrowlength=1.4,arrowinset=0.0]{*->}(0.0,-1.6571428)(10.14,-1.6571428)
\psline[linecolor=black, linewidth=0.04, dotsize=0.07055555555555555cm 2.0,arrowsize=0.05291666666666667cm 2.0,arrowlength=1.4,arrowinset=0.0]{*->}(3.9836364,-4.4302654)(4.001818,4.1246753)
\psline[linecolor=blue, linewidth=0.04, dotsize=0.07055555555555555cm 2.0]{*-*}(3.984706,-1.6924368)(1.1964706,1.2016808)
\rput[bl](2.5,0.1){\large \bf \textcolor{blue}{$B_0(\omega)$}}
\end{pspicture}
}
\end{center}
\caption{Absorbing set \textcolor{blue}{\bf $B_0(\omega)$}}
\label{figure2}
\end{figure}
\end{multicols}

\qed
\end{proof}

Then, thanks to Proposition \ref{attractor}, it follows directly that system \eqref{7}-\eqref{8} possesses a unique pullback random attractor given by 
$$\mathcal{A}(\omega)\subseteq B_\varepsilon(\omega),\quad\quad \text{for all}\,\, \varepsilon>0,$$ 
\noindent thus 
$$\mathcal{A}(\omega)\subseteq B_0(\omega),$$ 
\noindent where
$$B_0(\omega):=\left\{(\sigma,\kappa)\in\mathcal{X}\,\,:\,\, \sigma+\kappa=0,\, \sigma\geq-(a+S^0)e^{\alpha z^*(\omega)}\right\}$$
\noindent is a tempered compact random absorbing set (see Figure \ref{figure2}) in $\mathcal{X}$.\n

The following result provides information about the internal structure of the unique pullback random attractor.

\begin{proposition}\label{pe}
The unique pullback random attractor of system \eqref{7}-\eqref{8} consists of a singleton component given by $\mathcal{A}(\omega)=\{(0,0)\}$ as long as
\begin{equation}
\bar{D}>\mu(S^0)\label{ce}
\end{equation}
\noindent holds true.
\end{proposition}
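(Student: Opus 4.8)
The plan is to use that the attractor already sits inside $B_0(\omega)=\{(\sigma,\kappa)\in\mathcal X:\sigma+\kappa=0,\ \sigma\geq-(a+S^0)e^{\alpha z^*(\omega)}\}$, a segment on the invariant line $\{\sigma+\kappa=0\}$ with $\kappa\geq0$, and to show that under \eqref{ce} every complete bounded trajectory in this set must be the origin. First I would note that $(0,0)$ is an equilibrium of \eqref{7}-\eqref{8} (both right-hand sides vanish there), so the constant trajectory at the origin is complete and bounded and hence $(0,0)\in\mathcal{A}(\omega)$. It therefore suffices to prove the reverse inclusion $\mathcal{A}(\omega)\subseteq\{(0,0)\}$, which I would obtain from a uniform pullback estimate on $B_0$ together with the invariance of $\mathcal{A}$.

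The core is the $\kappa$-dynamics on the invariant set. Recall from \eqref{Q} that $Q=\sigma+\kappa$ solves $\dot Q=-(\bar{D}+\alpha z^*)Q$, so $\{Q=0\}$ is invariant; along any trajectory issued from $B_0(\theta_{-t}\omega)$ we thus have $\sigma(s)=-\kappa(s)\leq0$, whence $S^0+\sigma(s)e^{-\alpha z^*}\leq S^0$ and, by monotonicity of $\mu$, $\mu(S^0+\sigma e^{-\alpha z^*})\leq\mu(S^0)$. Writing \eqref{8} as $\dot\kappa=\kappa\,[\mu(S^0+\sigma e^{-\alpha z^*})-\bar{D}-\alpha z^*]$ and using $\kappa\geq0$ yields the differential inequality $\dot\kappa\leq\kappa\,[\mu(S^0)-\bar{D}-\alpha z^*]$, which I would integrate by comparison to get, for every $u_0=(\sigma_0,\kappa_0)\in B_0(\theta_{-t}\omega)$,
$$
\kappa(t;0,\theta_{-t}\omega,u_0)\leq\kappa_0\exp\left[(\mu(S^0)-\bar{D})\,t-\alpha\int_{-t}^{0}z^*(\theta_s\omega)\,ds\right].
$$
On $B_0(\theta_{-t}\omega)$ the geometry gives the uniform bound $0\leq\kappa_0=-\sigma_0\leq(a+S^0)e^{\alpha z^*(\theta_{-t}\omega)}$.

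To conclude I would pass to the pullback limit $t\to\infty$. By Proposition \ref{property-delta-lm1} the ergodic limit $\tfrac1t\int_{-t}^0 z^*(\theta_s\omega)\,ds\to0$ holds and $e^{\alpha z^*(\theta_{-t}\omega)}$ is tempered, so both random factors $e^{\alpha z^*(\theta_{-t}\omega)}$ and $e^{-\alpha\int_{-t}^0 z^*(\theta_s\omega)ds}$ grow subexponentially; under \eqref{ce} the deterministic factor $e^{(\mu(S^0)-\bar{D})t}$ decays exponentially and dominates, so the right-hand side above tends to $0$ uniformly in $u_0\in B_0(\theta_{-t}\omega)$. Since $\sigma=-\kappa$ on $B_0$, the full solution converges to the origin, i.e. $\sup_{v\in\varphi_u(t,\theta_{-t}\omega)B_0(\theta_{-t}\omega)}\Vert v\Vert_{\mathcal X}\to0$. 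Finally, invariance gives $\mathcal{A}(\omega)=\varphi_u(t,\theta_{-t}\omega)\mathcal{A}(\theta_{-t}\omega)\subseteq\varphi_u(t,\theta_{-t}\omega)B_0(\theta_{-t}\omega)$ for all $t$, so any $p\in\mathcal{A}(\omega)$ obeys $\Vert p\Vert_{\mathcal X}\leq\sup_{v}\Vert v\Vert_{\mathcal X}\to0$, forcing $\mathcal{A}(\omega)=\{(0,0)\}$. The main obstacle is precisely this last asymptotic control: ensuring that the merely tempered random factors are beaten by the decay rate $\bar{D}-\mu(S^0)>0$, which is exactly where hypothesis \eqref{ce} is used; the sign argument $\sigma\leq0$ on the invariant line, which produces the clean bound $\mu(\cdot)\leq\mu(S^0)$, is the other point requiring care.
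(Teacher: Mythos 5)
Your proposal is correct, and its analytic core is exactly the paper's: the comparison inequality $\frac{d\kappa}{dt}\leq\kappa\left[\mu(S^0)-\bar{D}-\alpha z^*(\theta_t\omega)\right]$, obtained from $\sigma\leq 0$ plus monotonicity of $\mu$, integrated so that under \eqref{ce} the deterministic decay rate $\bar{D}-\mu(S^0)>0$ beats the subexponential random factors coming from Proposition \ref{property-delta-lm1}. Where you differ is in the surrounding logic. The paper reuses the dichotomy from the proof of Theorem \ref{theorem1}: in Case 1 ($\sigma$ always positive) the claim is immediate since $\sigma+\kappa\to 0$ with both components nonnegative, and in Case 2 ($\sigma(t^*)=0$) one has $\sigma<0$ for $t>t^*$ and the $\kappa$-estimate is integrated from $t^*$ onward, concluding directly from pullback decay of trajectories. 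You bypass the case split by working only inside $B_0(\omega)$, exploiting the already-established inclusion $\mathcal{A}(\omega)\subseteq B_0(\omega)$ and the invariance of the line $\{\sigma+\kappa=0\}$ (from \eqref{Q}), where $\sigma=-\kappa\leq 0$ holds automatically; you then close the argument through the invariance identity $\mathcal{A}(\omega)=\varphi_u(t,\theta_{-t}\omega)\mathcal{A}(\theta_{-t}\omega)\subseteq\varphi_u(t,\theta_{-t}\omega)B_0(\theta_{-t}\omega)$ together with the tempered uniform bound $\kappa_0\leq(a+S^0)e^{\alpha z^*(\theta_{-t}\omega)}$. This makes explicit two steps the paper leaves implicit — the uniformity of the decay over the set containing the attractor, and the passage from trajectory decay to the identification $\mathcal{A}(\omega)=\{(0,0)\}$ — at the cost of slightly more machinery; the paper's version buys brevity by leaning on Theorem \ref{theorem1}. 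One caveat you share with the paper: the bound $\mu(S^0+\sigma e^{-\alpha z^*})\leq\mu(S^0)$ requires not only $\sigma\leq 0$ but also $a+S^0+\sigma e^{-\alpha z^*}>0$, since $\mu(s)=ms/(a+s)$ is increasing only on $(-a,\infty)$ and the inequality genuinely fails when the denominator is negative; both arguments tacitly rely on the trajectory bound \eqref{sigmabound} from Theorem \ref{theorem1} to guarantee this, and your write-up (like the paper's) would benefit from citing it explicitly at that point.
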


\begin{proof}We would like to note that the result in this proposition follows trivially if $\sigma$ remains always positive ({\bf Case 1} in the proof of Theorem \ref{theorem1}) since in that case both $\sigma$ and $\kappa$ are positive and $\sigma+\kappa$ tends to zero when $t$ goes to infinity, thus the pullback random attractor is directly given by $\mathcal{A}(\omega)=\{(0,0)\}$.\n

Due to the previous reason, we will only present the proof in case of there exists some $t^*>0$ such that $\sigma(t^*)=0$ which implies that $\sigma(t)<0$ for all $t>t^*$ whence $S(t)<S^0$ for all $t>t^*$ then $\mu(S)\leq\mu(S^0)$ for all $t>t^*$ since $\mu(s)=ms/(a+s)$ is an increasing function. Hence, from \eqref{8} we have
\begin{eqnarray}
\nonumber
\frac{d\kappa}{dt}&\leq&-(\bar{D}+\alpha z^*)\kappa+\frac{mS^0}{a+S^0}\kappa,
\end{eqnarray}
\noindent which allows us to state the following inequality
\begin{eqnarray}
\nonumber
\kappa(t;t^*,\fibra,\kappa(t^*))&\leq&\kappa(t^*)e^{-\left(\bar{D}-\frac{mS^0}{a+S^0}\right)(t-t^*)-\alpha\int_{-t}^{t^*}z^*(\theta_s\omega)ds},
\end{eqnarray}
\noindent where the right side tends to zero when $t$ goes to infinity as long as \eqref{ce} is fulfilled, therefore the unique pullback random attractor is given by $\mathcal{A}(\omega)=\{(0,0)\}.$\n
\qed
\end{proof}

\subsection{Existence of the pullback random attractor for the stochastic chemostat}

We have proved that the system \eqref{7}-\eqref{8} has a unique global solution $u(t;0,\omega,u_0)$ which remains in $\mathcal X$ for all $u_0\in\mathcal X$ and generates the RDS $\{\varphi_u(t,\omega)\}_{t\geq 0,\omega\in\Omega}$.\n

Now, we define a mapping 
$$\mathcal{T}: \Omega\times \mathcal X \longrightarrow \mathcal X$$
as follows
$$\mathcal{T}(\omega,\zeta)=\mathcal{T}(\omega,(\zeta_1,\zeta_2))=  \left(\begin{array}{c}
    T_1(\omega,\zeta_1) \\ 
    T_2(\omega,\zeta_2)\\ 
  \end{array}\right)=\left(\begin{array}{c}
    (\zeta_1-S^0)e^{\alpha z^*(\omega)} \\ 
    \zeta_2 e^{\alpha z^*(\omega)}\\ 
  \end{array}\right)$$
whose inverse is given by
$$\mathcal{T}^{-1}(\omega,\zeta)= \left(\begin{array}{c}
    S^0+\zeta_1 e^{-\alpha z^*(\omega)} \\ 
    \zeta_2 e^{-\alpha z^*(\omega)}\\ 
  \end{array}\right).$$

We know that $v(t)=(S(t),x(t))$ and $u(t)=(\sigma(t),\kappa(t))$ are related by (\ref{vcsigma})-(\ref{vckappa}). Since $T$ is a homeomorphism, thanks to Lemma \ref{lemmaconjugation} we obtain a conjugated RDS given by
\begin{eqnarray}
\nonumber \varphi_v(t,\omega)v_0 &:=& \mathcal{T}^{-1}(\theta_t \omega,\varphi_u(t,\omega)\mathcal{T}(\omega,v_0))
\\[1.3ex]
\nonumber &=& \mathcal{T}^{-1}\left(\theta_t\omega,\varphi_u (t,\omega)\left(\begin{array}{c}
    (S(0)-S^0)e^{\alpha z^*(\omega)} \\ 
    x(0) e^{\alpha z^*(\omega)}\\ 
  \end{array}\right)\right)
\\[1.3ex]
\nonumber &=& \mathcal{T}^{-1}(\theta_t\omega,\varphi_u(t,\omega)u_0)
\\[1.3ex]
\nonumber &=& \mathcal{T}^{-1}(\theta_t\omega,u(t;0,\omega,u_0))
\\[1.3ex]
\nonumber &=& \left(\begin{array}{c}
    S^0+\sigma(t)e^{-\alpha z^*(\theta_t \omega)} \\ 
    \kappa(t) e^{-\alpha z^*(\theta_t \omega)}\\ 
  \end{array}\right)
\\[1.3ex]
\nonumber &=& v(t;0,\omega,v_0)
\end{eqnarray}
which means that $\{\varphi_v(t,\omega)\}_{t\geq 0,\omega\in\Omega}$ is an RDS for our original stochastic system \eqref{5}-\eqref{6} whose unique pullback random attractor satisfies that $\widehat{\mathcal{A}}(\omega) \subseteq \widehat{B}_0(\omega)$, where
\begin{equation}
\widehat{B}_0(\omega):=\left\{(S,x)\in\mathcal{X}\,\,:\,\, S+x=S^0,\,\, S\geq-a\right\}.\label{absb}
\end{equation}

In addition, under \eqref{ce}, the unique pullback random attractor for \eqref{5}-\eqref{6} reduces to a singleton subset $\widehat{\mathcal{A}}(\omega)=\{(S^0,0)\}$, which means that the microorganisms become extinct.\n

We remark that it is not possible to provide conditions which ensure the persistence of the microbial biomass even though our numerical simulations will show that we can get it for many different values of the parameters involved in the system, as we will present in Section \ref{nsfc}.

\section{Numerical simulations and final comments}\label{nsfc}

To confirm the results provided through this paper, in this section we will show some numerical simulations concerning the original stochastic chemostat model given by system \eqref{5}-\eqref{6}. To this end, we will make use of the Euler-Maruyama method (see e.g. \cite{maru} for more details) which consists of considering the following numerical scheme:

\begin{eqnarray*}
S_j &=& S_{j-1}+f(x_{j-1},S_{j-1})\Delta t+g(x_{j-1},S_{j-1})\cdot(W(\tau_j)-W(\tau_{j-1})),
\\[1.3ex]
x_j &=& x_{j-1}+\widetilde{f}(x_{j-1},S_{j-1})\Delta t+\widetilde{g}(x_{j-1},S_{j-1})\cdot(W(\tau_j)-W(\tau_{j-1})),
\end{eqnarray*}
\noindent where $f$, $g$, $\widetilde{f}$ and $\widetilde{g}$ are functions defined as follows

\begin{eqnarray*}
f(x_{j-1},S_{j-1}) &=& \left[(S^0-S_{j-1})D-\f{mS_{j-1}x_{j-1}}{a+S_{j-1}}\right],
\\[1.3ex]
g(x_{j-1},S_{j-1}) &=& \alpha(S^0-S_{j-1}),
\\[1.3ex]
\widetilde{f}(x_{j-1},S_{j-1}) &=& x_{j-1}\left(\f{mS_{j-1}}{a+S_{j-1}}-D\right),
\\[1.3ex]
\widetilde{g}(x_{j-1},S_{j-1}) &=& \alpha x_{j-1},
\\[1.3ex]
\end{eqnarray*}
\noindent and we remark that

$$W(\tau_j)-W(\tau_{j-1})=\sum_{k=jR-R+1}^{jR}dW_k,$$
where $R$ is a nonnegative integer number and $dW_k$ are $\mathcal{N}(0,1)-$distributed independent random variables which can be generated numerically by pseudorandom number generators.\n

From now on, we will display the phase plane $(S,x)$ of the dynamics of our chemostat model, where the blue dashed lines represent the solutions of the deterministic (i.e., with $\alpha=0$) system \eqref{I1}-\eqref{I2} and the other ones are different realizations of the stochastic chemostat model \eqref{5}-\eqref{6}. In addition, we will set $S^0=1$, $a=0.6$, $m=3$ and we will consider $(S(0),x(0))=(2.5,5)$ as initial pair. We will also present different cases where the value of the dilution rate and the amount of noise change in order to obtain different situations in which the condition \eqref{ce} is (or is not) fulfilled.\n

On the one hand, in Figure \ref{sim1} we take $D=3$ and we choose $\alpha=0.1$ (left) and $\alpha=0.5$ (right). In both cases, it is easy to check that $\bar{D}=1.5050$ (left), $\bar{D}=1.6250$ (right) and $\mu(S^0)=1.8750$ thus, thanks to Proposition \ref{pe}, we know that the microorganisms become extinct, as we show in the simulations.

\begin{figure}[H]
\begin{center}
\noindent\includegraphics[scale=0.28]{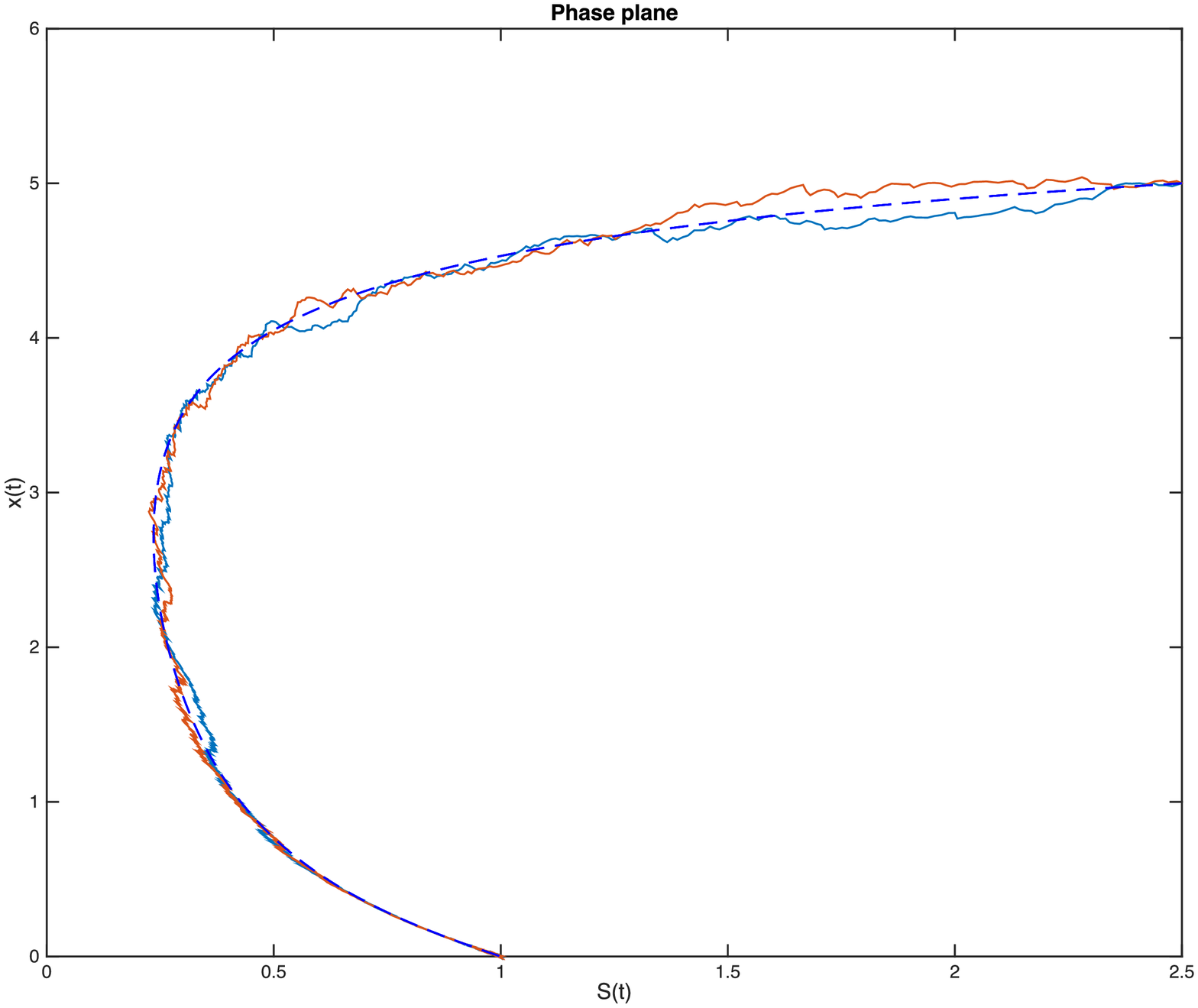}\hspace{-0.5cm}\includegraphics[scale=0.28]{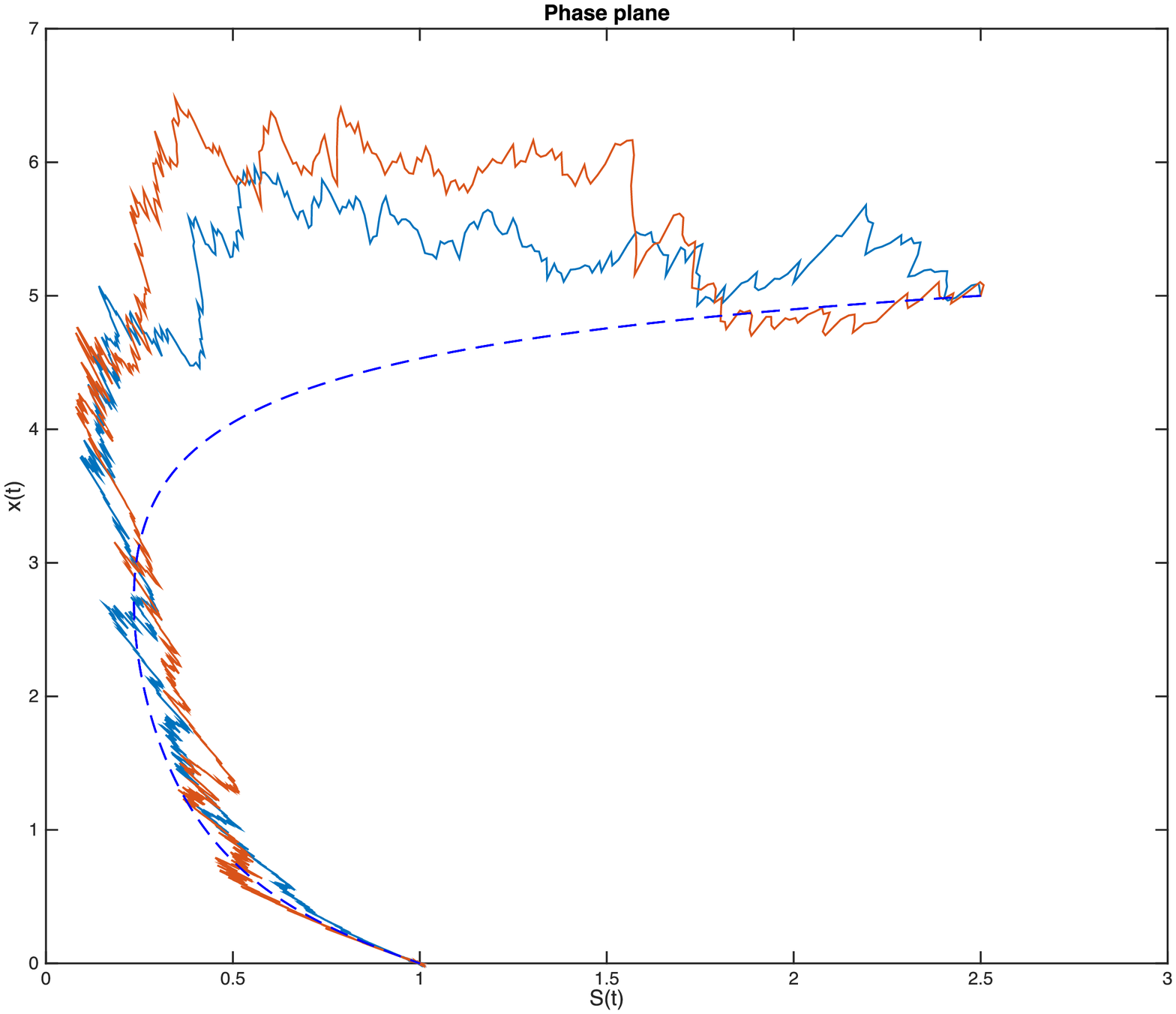}
\caption{Extintion. $\alpha=0.1$ (left) and $\alpha=0.5$ (right)}
\label{sim1}
\end{center}
\end{figure}

On the other hand, in Figure \ref{sim2} we take $D=3$ but, in this case, $\alpha=1$ (left) and $\alpha=1.5$ (right). Then, it follows that $\bar{D}=2$ (left) and $\bar{D}=2.6250$ (right) then, since $\mu(S^0)=1.8750$ and thanks to Proposition \ref{pe}, we also obtain the extinction of the species.

\begin{figure}[H]
\begin{center}
\noindent\includegraphics[scale=0.28]{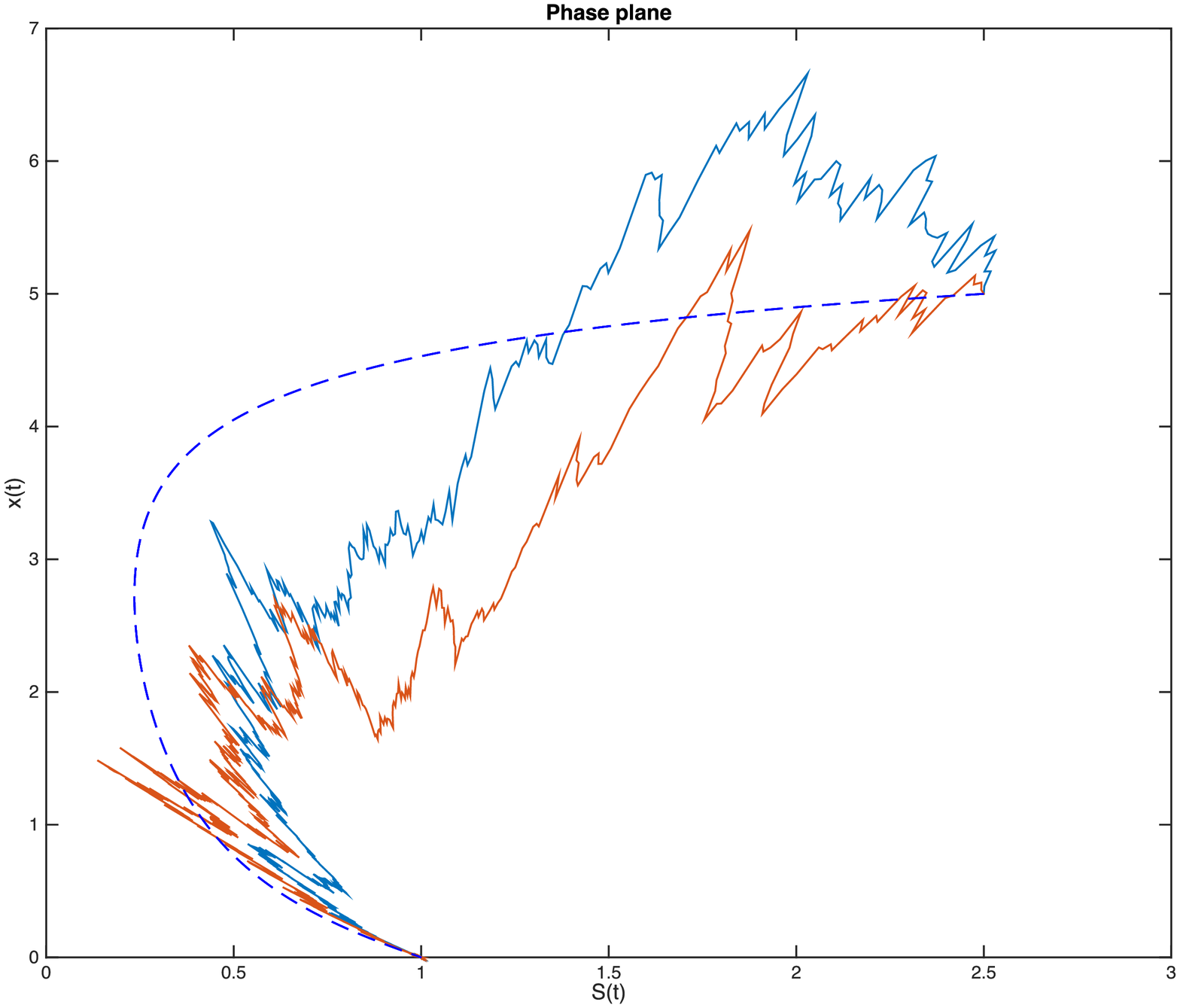}\hspace{-0.5cm}\includegraphics[scale=0.28]{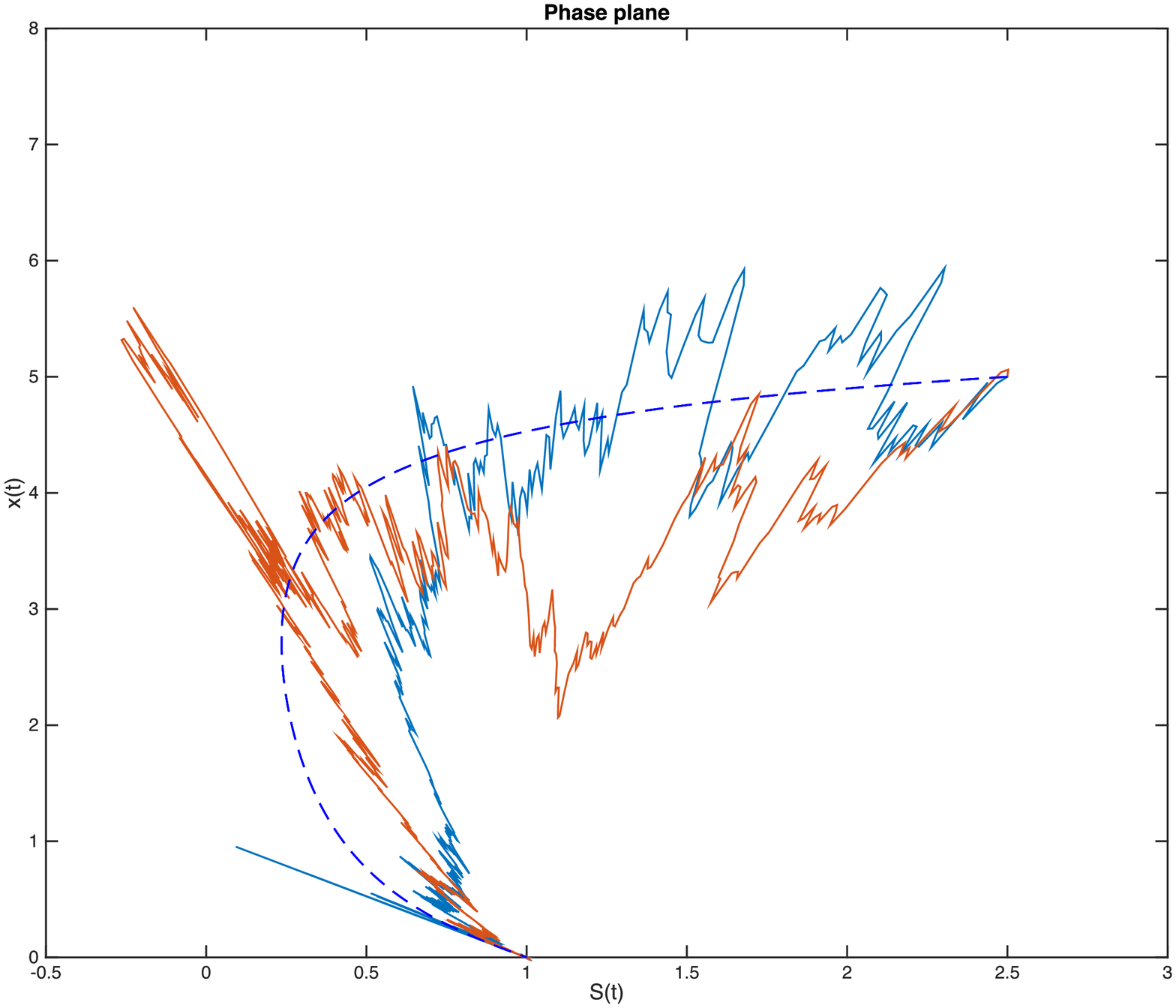}
\caption{Extinction. $\alpha=1$ (left) and $\alpha=1.5$ (right)}
\label{sim2}
\end{center}
\end{figure}

Now, in Figure \ref{sim3} we will take $D=1.5$ and we choose $\alpha=0.1$ (left) and $\alpha=0.5$ (right). Then, we can check that $\bar{D}=1.5050$ (left), $\bar{D}=1.6250$ (right) and $\mu(S^0)=1.8750$ thus, although it is not possible to ensure mathematically the persistence of the microbial biomass, we can get it for the previous values of the parameters, as we can see in the simulations.

\begin{figure}[H]
\begin{center}
\noindent\includegraphics[scale=0.28]{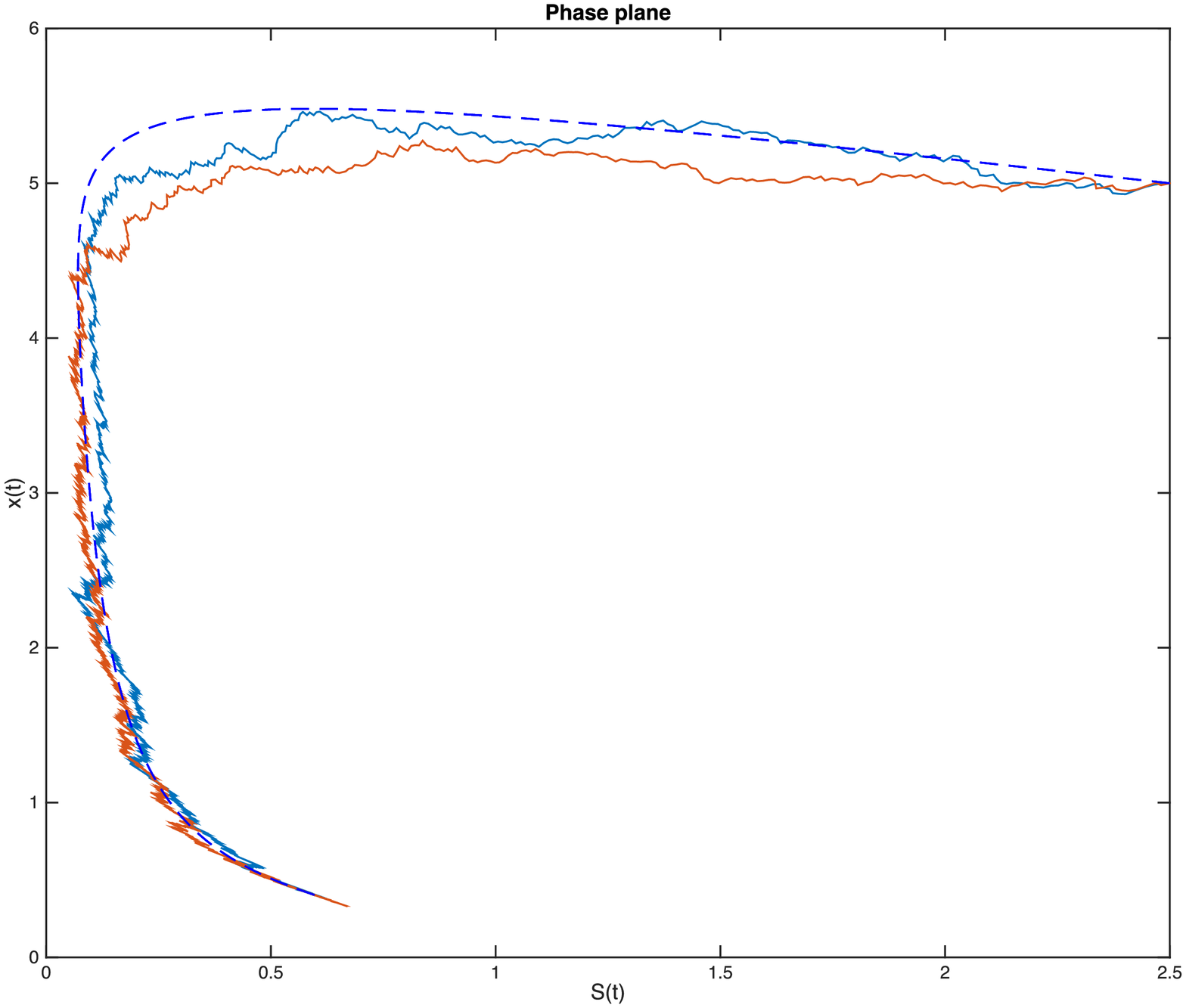}\hspace{-0.5cm}\includegraphics[scale=0.28]{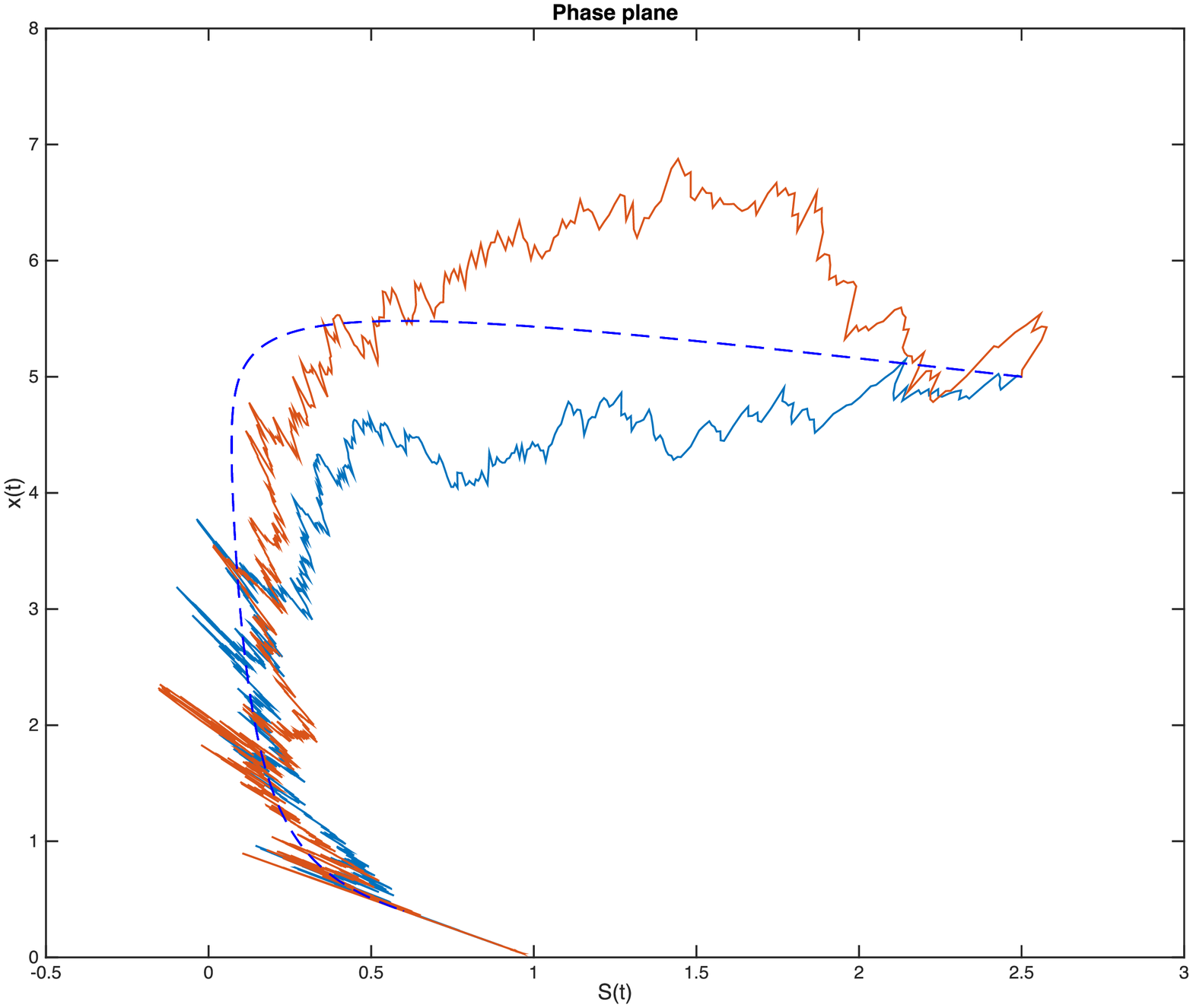}
\caption{Persistence. $\alpha=0.1$ (left) and $\alpha=0.5$ (right)}
\label{sim3}
\end{center}
\end{figure}

However, in Figure \ref{sim4} we take $D=1.5$, $\alpha=1$ (left) and $\alpha=1.5$ (right). Since condition \eqref{ce} holds true, it is not surprising to obtain the extinction of the microorganisms.

\begin{figure}[H]
\begin{center}
\noindent\includegraphics[scale=0.28]{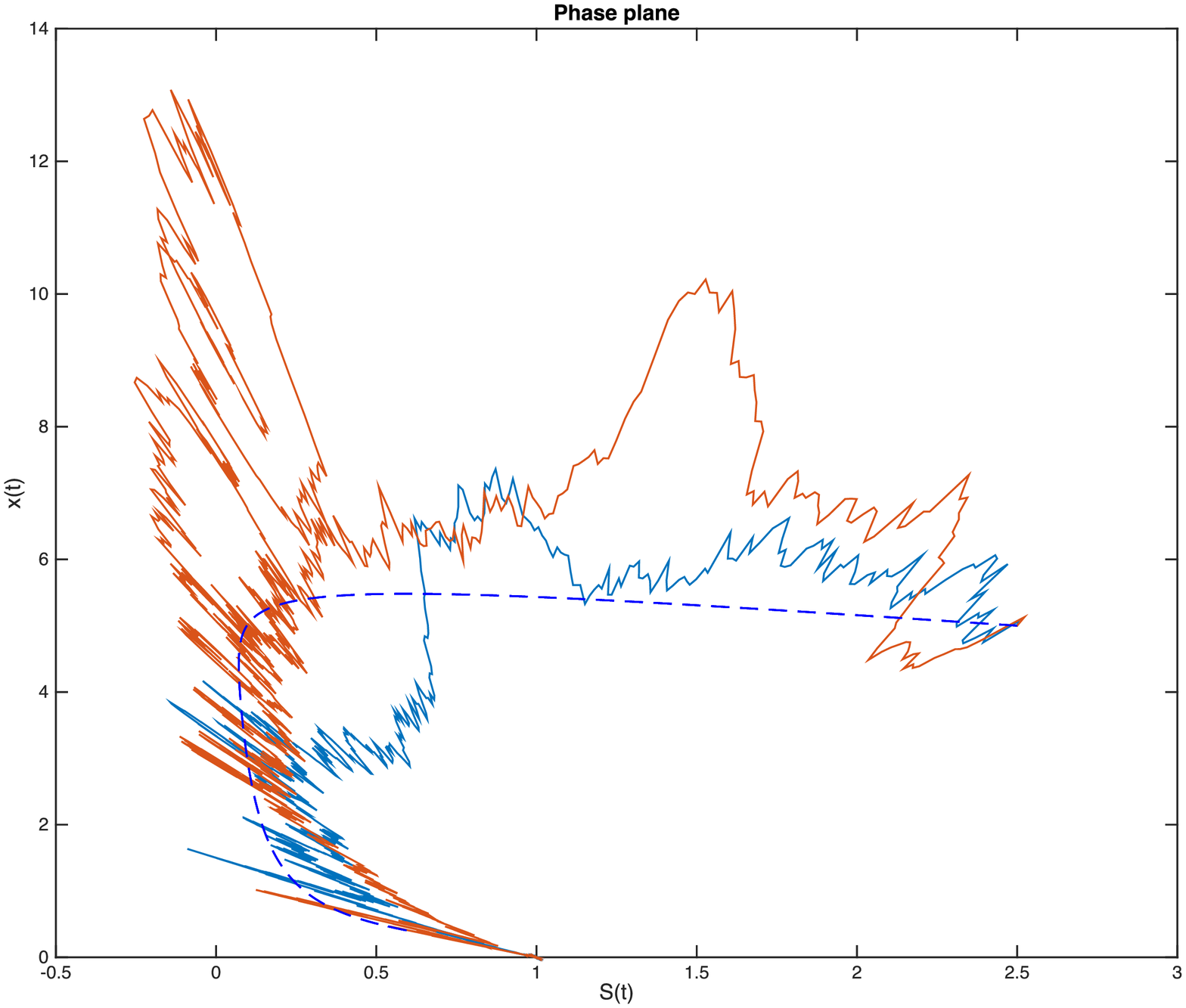}\hspace{-0.5cm}\includegraphics[scale=0.28]{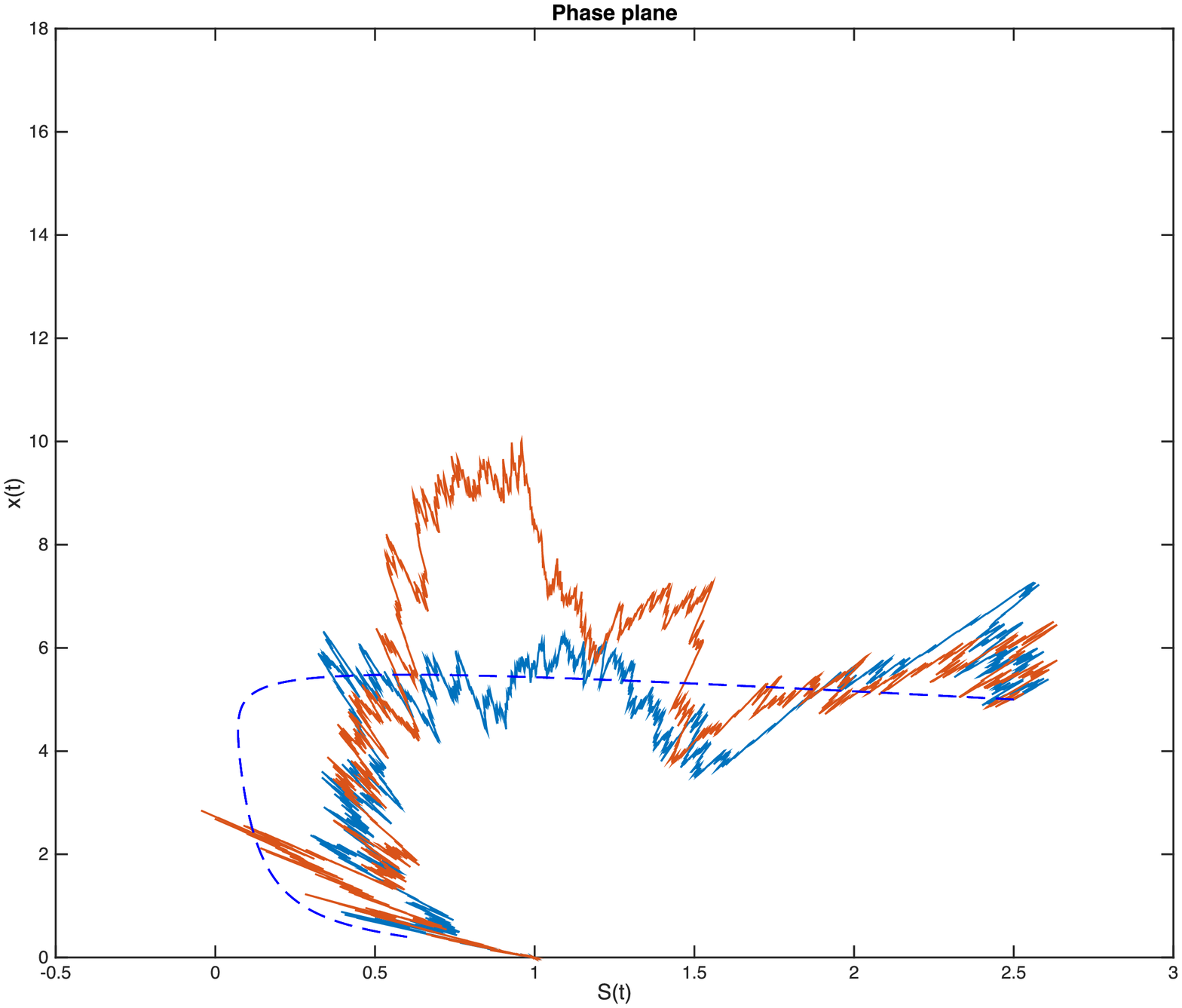}
\caption{Extinction. $\alpha=1$ (left) and $\alpha=1.5$ (right)}
\label{sim4}
\end{center}
\end{figure}

Finally, in Figure \ref{sim5} we will take $D=0.8$ and we will choose $\alpha=0.1$ (left) and $\alpha=0.5$ (right). It is easy to check that $\bar{D}=0.8050$ (left), $\bar{D}=0.9250$ (right) and $\mu(S^0)=1.8750$ thus, although it is not possible to guarantee mathematically the persistence of the species, since \eqref{ce} is not fulfilled, we can obtain it in this case.

\begin{figure}[H]
\begin{center}
\noindent\includegraphics[scale=0.28]{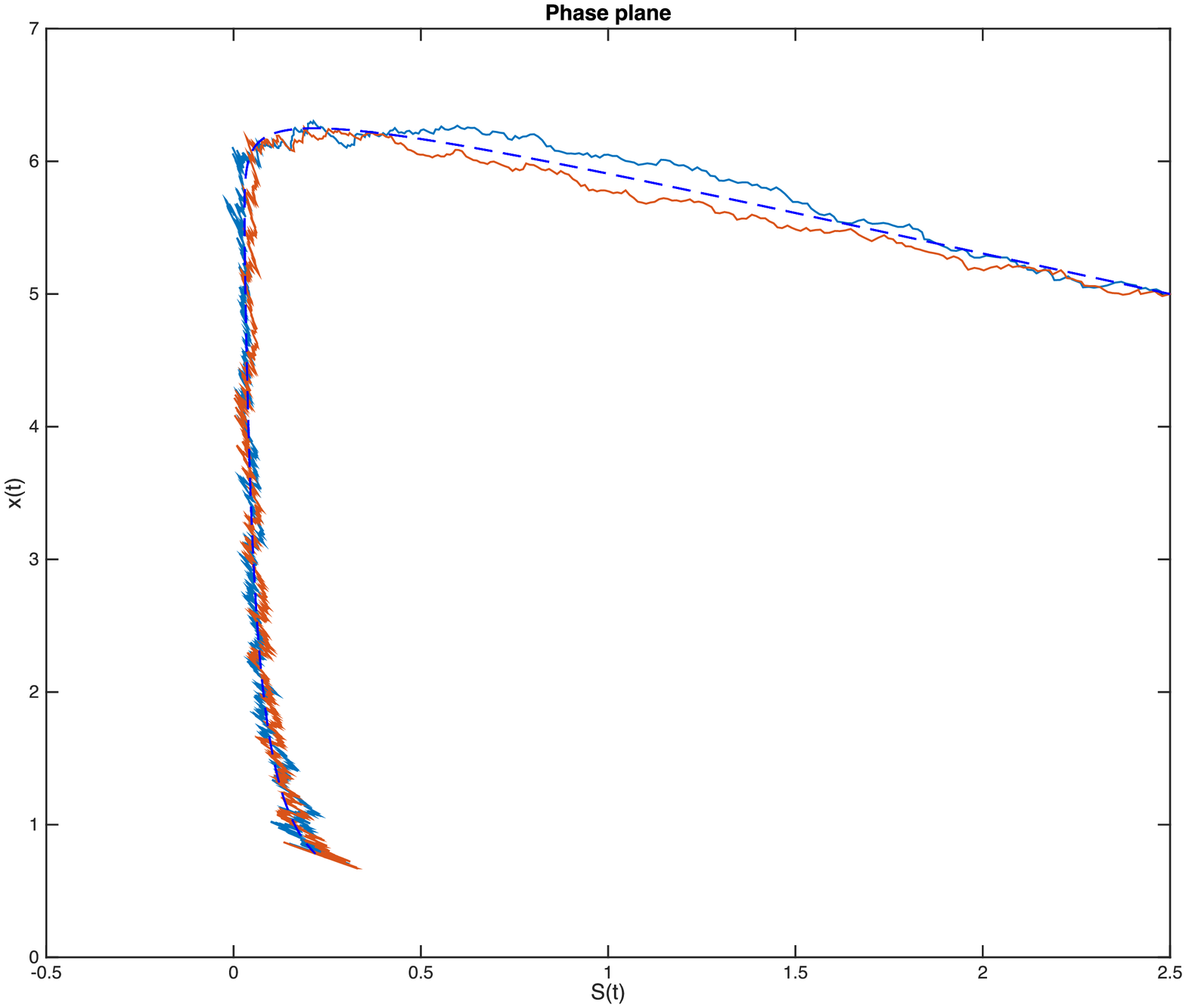}\hspace{-0.5cm}\includegraphics[scale=0.28]{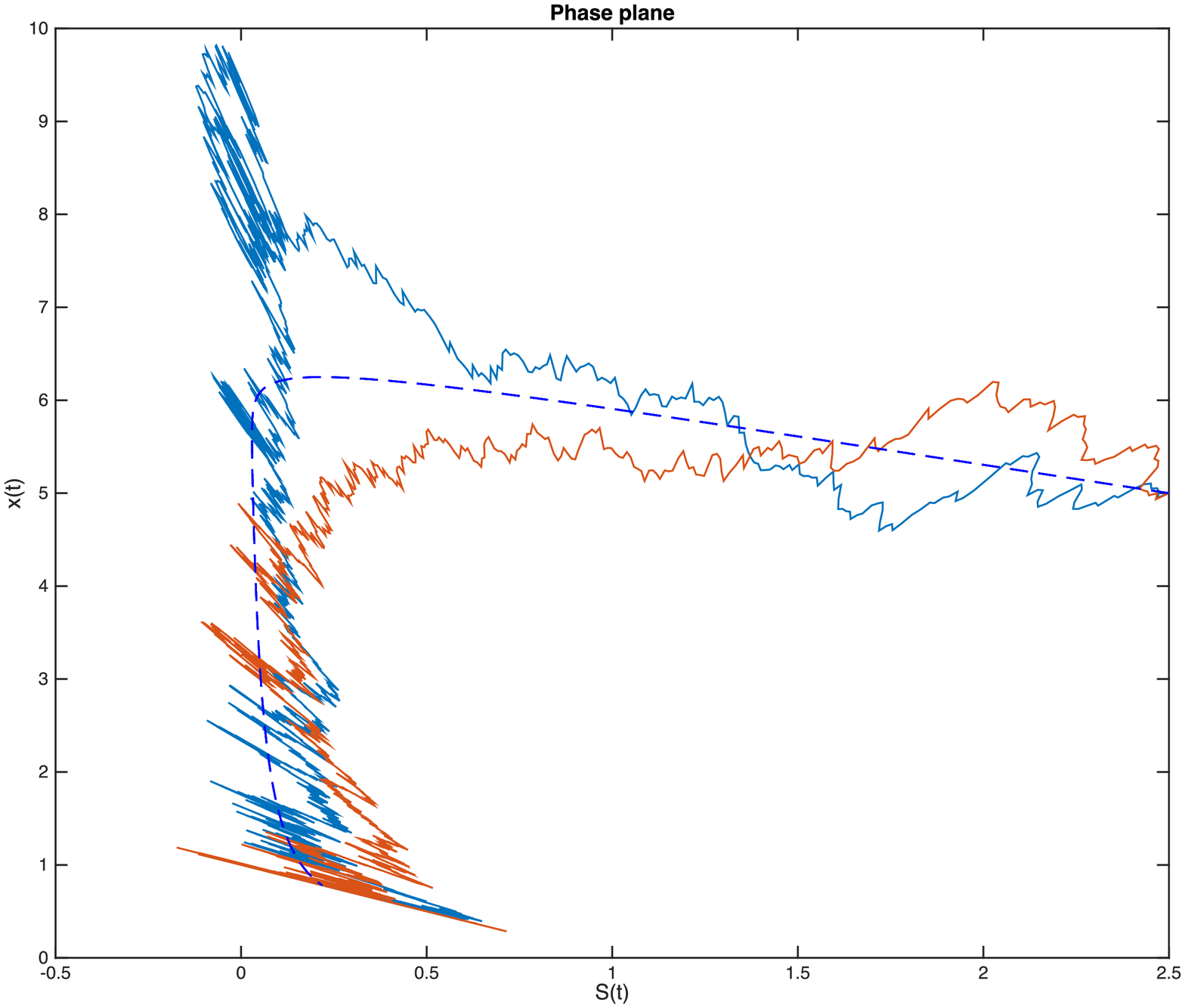}
\caption{Persistence. $\alpha=0.1$ (left) and $\alpha=0.5$ (right)}
\label{sim5}
\end{center}
\end{figure}

\begin{remark}
We would like to mention that the fact that the substrate $S$ (or its corresponding $\sigma$) may take negative values does not produce any mathematical inconsistence in our analysis, in other words, our mathematical analysis is accurate to handle the mathematical problem. However, from a biological point of view, this may reflect some troubles and suggests that either the fact of perturbing the dilution rate with an additive noise may not be a realistic situation, or that we should try to use a some kind of switching system to model our real chemostat in such a way that when the dilution may be negative we use a different equation to model the system. This will lead us to a different analysis in some subsequent papers by considering a different kind of randomness or stochasticity in this parameter or designing a different model for our problem.\n

On the other hand, it could also be considered a noisy term in each equation of the deterministic model in the same fashion as in the paper by Imhof and Walcher \cite{imhof}, which ensures the positivity of both the nutrient and biomass, although does not preserve the wash out equilibrium from the deterministic to the stochastic model (see e.g. \cite{CGLii} for more details about this situation). \end{remark}

\end{document}